\newcommand\xrowht[2][0]{\addstackgap[.5\dimexpr#2\relax]{\vphantom{#1}}}
\newtheorem{lemma}{Lemma}
\newtheorem{theorem}{Theorem}
\newtheorem{remark}{Remark}
\newtheorem{conjecture}{Conjecture}
\newcommand{\tr}{{\rm tr}}
\newcolumntype{L}[1]{>{\raggedright\arraybackslash}p{#1}}
\newcolumntype{C}[1]{>{\centering\arraybackslash}p{#1}}
\newcolumntype{R}[1]{>{\raggedleft\arraybackslash}p{#1}}
\newcommand{\mt}[2]{\begin{tabular}{@{}L{1cm}R{1cm}@{}}
&#2\\
#1&
\end{tabular}}
\title{Maximum spectral gaps of graphs}
\author{George Brooks
\thanks{University of South Carolina, Columbia, SC. ({\tt ghbrooks@email.sc.edu}). The author is partially supported by NSF DMS 2038080 grant.}
\and
William Linz \thanks{University of South Carolina, Columbia, SC. ({\tt wlinz@mailbox.sc.edu}). The author is partially supported by NSF DMS 2038080 grant.}
\and Linyuan Lu \thanks{University of South Carolina, Columbia, SC. ({\tt lu@math.sc.edu}). The author is partially supported by NSF DMS 2038080 grant.}
}
\begin{document}
\newcommand{\spread}{{\rm spread}}
\newcommand{\G}{{\cal G}}
\maketitle

\abstract{
The \emph{spread} of a graph $G$ is the difference $\lambda_1 - \lambda_n$ between the largest and smallest eigenvalues of its adjacency matrix. Breen, Riasanovsky, Tait and Urschel recently determined the graph on $n$ vertices with maximum spread for sufficiently large $n$. In this paper, we study a related question of maximizing the difference $\lambda_{i+1} - \lambda_{n-j}$ for a given pair $(i, j)$ over all graphs on $n$ vertices. We give upper bounds for all pairs $(i, j)$, exhibit an infinite family of pairs where the bound is tight, and show that for the pair $(1, 0)$ the extremal example is unique. These results contribute to a line of inquiry pioneered by Nikiforov aiming to maximize different linear combinations of eigenvalues over all graphs on $n$ vertices.
}

\section{Introduction}
Given a graph $G$, the \emph{spread} of $G$ is the difference between the largest eigenvalue and the smallest eigenvalue of its adjacency matrix $A$. The spread of graphs was introduced and first studied by Gregory, Hershkowitz and Kirkland~\cite{GHK2001} as a specialization of the concept of the spread of a matrix introduced by Mirsky~\cite{Mir}. In 2021, answering a question in \cite{GHK2001}, Breen, Riasanovsky, Tait, and Urschel~\cite{BRTU2021+} determined the maximum of the spread among all simple graphs on $n$ vertices. For all simple graphs $G$ on $n$ vertices, they show that the spread is at most $\frac{2}{\sqrt{3}}n$, where the maximum spread is (asymptotically) achieved by a split graph, which is obtained by joining a clique of order $\left\lfloor\frac{2n}{3}\right\rfloor$ 
and an independent set of order $\left \lceil\frac{n}{3} \right\rceil$.

In this paper, we will consider several variations of the maximum spread problem.  Let $\lambda_1 \ge \lambda_2 \ge \ldots \ge \lambda_n$ be the eigenvalues of $A$ sorted in nonincreasing order.
For any fixed integers $i, j\geq 0$, we call the difference $\lambda_{i+1} -\lambda_{n-j}$,
the \emph{$(i,j)$-spread} of $G$, denoted by $\spread_{i,j}(G)$.
Let $S_n$ denote the set of all simple graphs on $n$ vertices.
We define 
\[\spread_{i,j}(n)=\max\{\spread_{i,j}(G) \colon G\in S_n \}\]
and
\[s_{i,j}=\lim_{n\to\infty} \frac{\spread_{i, j}(n)}{n}.\]

 Nikiforov~\cite{Nik06} proved that this limit always exists. In fact, Nikiforov proved that
\[s_{i,j} = \sup_{n} \frac{\spread_{i,j}(n)}{n}.\] 

In our notation, Breen-Riasanovsky-Tait-Urschel's result can be stated as
\[s_{0,0}=\frac{2}{\sqrt{3}}.\]

To the best of our knowledge, the quantity $s_{i, j}$ has not been previously studied except in special cases. The general question of maximizing a linear combination of eigenvalues of a graph was stated by Nikiforov~\cite[Theorem 1]{Nik06}, who showed that if $F(G)$ is a linear combination of the eigenvalues of a graph $G$ in the form $\sum_{i=1}^k\alpha_i\lambda_i + \beta_i\lambda_{n-i+1}$, then $\lim_{n\rightarrow \infty}\frac{F(G)}{n}$ exists. Specific cases have been more extensively studied; for example, Ebrahimi, Mohar, Nikiforov and Ahmady~\cite{EMNA08} provided good upper and lower bounds on $\lambda_1$ + $\lambda_2$.

In this paper, we provide bounds on $s_{i, j}$ for all $i$ and $j$ and find infinitely many pairs for which the bounds are tight. Our key idea is to focus on graphs with possible self-loops. Let $L_n$ denote the set of all graphs on $n$ vertices with at most one possible self-loop at each vertex. This is a slightly larger class of graphs than all simple graphs. From the matrix-theoretic viewpoint, instead of considering only symmetric $n\times n$ $(0, 1)$-matrices with all $0$s on the diagonal (this family of matrices corresponds to $S_n$), we consider the problems over the family of all symmetric $n\times n$ $(0, 1)$-matrices. Let 
\[\spread^*_{i, j}(n) = \max\{\spread_{i, j}(G): G\in L_n\}\]
and 
\[s_{i,j}^* = \lim_{n\rightarrow \infty}\frac{\spread^*_{i, j}(n)}{n}.\]
We show in Section 2 that $s_{i, j}^*$ exists and $s_{i, j}^* = s_{i, j}$. From now on, we use the notation $G^*$ to denote a graph with at most one self-loop per vertex whose underlying simple graph (obtained after deleting the loops) is the graph $G$. Naturally, there are many nonisomorphic graphs with at most one loop per vertex which have the same underlying simple graph $G$. Whenever it is necessary to distinguish which vertices have self-loops (for example, in Theorem~\ref{t1} and in Section 4), we will indicate this explicitly using an open dot to represent a loopless vertex and a closed dot for a vertex with a self-loop.

We have the following general upper bounds for $s_{i,j}$. The bounds are not tight in general.
\begin{theorem}\label{thm:genijub}
    For any positive integer $i$ and nonnegative integer $j$ with $i, j \le n$, for all graphs $G^*$ on $n$ vertices with at most one loop per vertex, we have
    \[ \lambda_{i+1}(G^*)-\lambda_{n-j}(G^*)\leq \frac{n}{2}\sqrt{\frac{i+j+1}{i(j+1)}}. \]
    Thus, 
    \[s_{i,j} \le \frac{1}{2}\sqrt{\frac{i+j+1}{i(j+1)}}.\]
\end{theorem}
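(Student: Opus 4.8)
The plan is to replace the adjacency matrix $A$ of $G^*$ by the ``balanced'' matrix $B := A - \tfrac12 J$, where $J$ is the all-ones $n\times n$ matrix. Since $A$ is a symmetric $(0,1)$-matrix, every entry of $B$ (the diagonal included) equals $\pm\tfrac12$, so that $\tr(B^2)=\sum_{k,l}B_{kl}^2 = \tfrac14 n^2$, i.e. $\sum_{k=1}^n \lambda_k(B)^2 = \tfrac{n^2}{4}$. This single clean identity is what ultimately produces the factor $\tfrac12$ in the bound, and subtracting $\tfrac12 J$ rather than working with $A$ directly is the whole point: the crude estimate $\tr(A^2)\le n^2$ is off by a factor of $4$. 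We may assume $i+1 < n-j$ (otherwise $\lambda_{i+1}\le \lambda_{n-j}$ and the spread is nonpositive), so that the index sets $\{1,\dots,i\}$ and $\{n-j,\dots,n\}$ are disjoint.

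Next I would transfer the problem from $A$ to $B$ using Weyl's inequalities, exploiting that $\tfrac12 J = \big(\tfrac{1}{\sqrt2}\one\big)\big(\tfrac{1}{\sqrt2}\one\big)^{\top}$ is positive semidefinite of rank one. From $A = B + \tfrac12 J$ and the general Weyl bound $\lambda_{p+q-1}(X+Y)\le \lambda_p(X)+\lambda_q(Y)$ with $q=2$ (so that $\lambda_2(\tfrac12 J)=0$), one gets $\lambda_{i+1}(A)\le \lambda_i(B)$; and since $\tfrac12 J \succeq 0$ forces $\lambda_{n-j}(A)\ge \lambda_{n-j}(B)$, we obtain
\[
\lambda_{i+1}(A)-\lambda_{n-j}(A)\ \le\ \lambda_i(B)-\lambda_{n-j}(B).
\]
I expect this step to be the crux: the rank-one shift moves the top index from $i+1$ down to $i$ while leaving the bottom index at $n-j$, and this asymmetry is exactly what turns the naive symmetric constant into the claimed $\sqrt{\tfrac{i+j+1}{i(j+1)}}$, with $i$ (not $i+1$) in the denominator.

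Finally I would finish with a Cauchy--Schwarz estimate on the $i+j+1$ extreme eigenvalues of $B$. Writing $a:=\lambda_i(B)$ and $b:=\lambda_{n-j}(B)$ and using the positive part $x_+:=\max(x,0)$, the inequalities $\lambda_k(B)\ge a$ for $k\le i$ and $\lambda_k(B)\le b$ for $k\ge n-j$ give $\lambda_k(B)^2\ge (a_+)^2$ and $\lambda_k(B)^2\ge \big((-b)_+\big)^2$ respectively, so that
\[
i\,(a_+)^2 + (j+1)\,\big((-b)_+\big)^2 \ \le\ \sum_{k=1}^n \lambda_k(B)^2 \ =\ \frac{n^2}{4}.
\]
Since $a-b\le a_+ + (-b)_+$, Cauchy--Schwarz yields
\[
a-b \ \le\ \sqrt{\Big(\tfrac1i+\tfrac1{j+1}\Big)\Big(i\,(a_+)^2+(j+1)\big((-b)_+\big)^2\Big)} \ \le\ \frac{n}{2}\sqrt{\frac{i+j+1}{i(j+1)}}.
\]
The positive-part device is what lets me avoid splitting into cases according to the signs of $a$ and $b$. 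Combining the three displays proves the matrix inequality for every $G^*\in L_n$; dividing by $n$, letting $n\to\infty$, and invoking $s_{i,j}=s_{i,j}^*$ from Section~2 gives $s_{i,j}\le \tfrac12\sqrt{\tfrac{i+j+1}{i(j+1)}}$.
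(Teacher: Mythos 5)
Your proof is correct, but it takes a genuinely different route from the paper's. The paper works with $A$ itself: it uses $\sum_k\lambda_k^2=\tr(A^2)=nd$ together with $\lambda_1\ge d$ to get the budget $\lambda_2^2+\cdots+\lambda_n^2\le nd-d^2\le n^2/4$, applies Cauchy--Schwarz to $\lambda_{i+1}+|\lambda_{n-j}|$ using the $i$ eigenvalues $\lambda_2,\dots,\lambda_{i+1}$ and the $j+1$ eigenvalues $\lambda_{n-j},\dots,\lambda_n$, and must split into cases according to the signs of $\lambda_{i+1}$ and $\lambda_{n-j}$, disposing of the ``wrong sign'' cases by citing Nikiforov's singular value bounds. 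You instead pass to $B=A-\tfrac12J$, where the budget $\tr(B^2)=n^2/4$ is an exact identity rather than an optimization over $d$, transfer the indices via Weyl's inequality (the rank-one PSD shift drops the top index from $i+1$ to $i$ and leaves the bottom index alone), and then run a single Cauchy--Schwarz with the positive-part device $a-b\le a_++(-b)_+$, which absorbs all sign cases at once. What your approach buys is self-containedness and uniformity: no case analysis and no appeal to external singular-value results. What the paper's approach buys is an explicit description of the equality conditions ($\lambda_1=d=n/2$, the $i$ top and $j+1$ bottom eigenvalues all equal to specific values, the rest zero), which is reused verbatim in the proof of Theorem~\ref{t1}; your argument would need a little extra bookkeeping through the Weyl step to recover that characterization. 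One tiny presentational quibble: the reduction to $i+1<n-j$ only needs $i<n-j$ for the two index sets $\{1,\dots,i\}$ and $\{n-j,\dots,n\}$ to be disjoint, but as stated everything is fine.
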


\begin{remark}\label{rem:equality}
    Equality holds in Theorem~\ref{thm:genijub} if and only if $\lambda_1 = d = \frac{n}{2}$, $\lambda_2 = \cdots = \lambda_{i+1} = \frac{n}{2}\sqrt{\frac{j+1}{i(i+j+1)}}$, $\lambda_{n-j} = \cdots = \lambda_{n} = -\frac{n}{2}\sqrt{\frac{i}{(j+1)(i+j+1)}}$ and $\lambda_{i+2} = \cdots = \lambda_{n-j-1} = 0$.
\end{remark}

Note that Theorem~\ref{thm:genijub} does not cover linear combinations of eigenvalues of the form $\lambda_1 - \lambda_{n-j}$. The next theorem addresses these cases. 

\begin{theorem}\label{thm:0jthm}
    For any integer $0\leq j\leq n-1$, for all graphs $G^*$ on $n$ vertices with at most one loop per vertex, we have
    \[ \lambda_{1}(G^*)-\lambda_{n-j}(G^*)\leq \frac{n}{2}\left(1+\sqrt{\frac{j+2}{j+1}}\right). \]
    Thus,
    \[s_{0, j} \le \frac{1}{2}\left(1+\sqrt{\frac{j+2}{j+1}}\right)\]
\end{theorem}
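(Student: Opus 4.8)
The plan is to linearize the Frobenius-norm constraint by passing to the signed matrix $B = 2A - J$, where $A$ is the adjacency matrix of $G^*$ and $J$ is the all-ones matrix. Since every entry of $A$ lies in $\{0,1\}$, every entry of $B$ lies in $\{-1,+1\}$, so $B$ is a symmetric $\pm 1$ matrix with $\sum_{k=1}^n \mu_k^2 = \tr(B^2) = \|B\|_F^2 = n^2$, where $\mu_1 \ge \cdots \ge \mu_n$ denote the eigenvalues of $B$. Writing $A = \tfrac12 J + \tfrac12 B$ and recalling that $J$ has eigenvalues $n$ and $0$ (with multiplicity $n-1$), I would apply Weyl's inequalities in two directions: the subadditive form gives $\lambda_1(A) \le \lambda_1(\tfrac12 J) + \lambda_1(\tfrac12 B) = \tfrac n2 + \tfrac12 \mu_1$, while monotonicity under the positive semidefinite perturbation $\tfrac12 J \succeq 0$ gives $\lambda_{n-j}(A) \ge \lambda_{n-j}(\tfrac12 B) = \tfrac12 \mu_{n-j}$. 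Subtracting reduces the problem to the single inequality $\mu_1 - \mu_{n-j} \le n\sqrt{\tfrac{j+2}{j+1}}$ for the $\pm 1$ matrix $B$.

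To prove that inequality I would first discard the trivial case $j = n-1$ (where $\lambda_{n-j} = \lambda_1$ and the bound is $0$) and assume $j \le n-2$, so that the index $1$ lies strictly above the block $\{n-j, \dots, n\}$. Since $\mu_{n-j}$ is the largest of the $j+1$ smallest eigenvalues, it dominates their average, giving $\mu_1 - \mu_{n-j} \le \mu_1 - \tfrac{1}{j+1}\sum_{k=n-j}^n \mu_k$. Setting $t = j+1$ and $\sigma = \bigl(\sum_{k=n-j}^n \mu_k^2\bigr)^{1/2}$, a first application of Cauchy--Schwarz against the all-ones vector yields $-\tfrac1t\sum_{k=n-j}^n \mu_k \le \sigma/\sqrt{t}$, and a second application to the pair $(\mu_1, \sigma)$ gives $\mu_1 + \sigma/\sqrt{t} \le \sqrt{1 + 1/t}\,\sqrt{\mu_1^2 + \sigma^2}$. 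Finally $\mu_1^2 + \sigma^2 \le \sum_{k=1}^n \mu_k^2 = n^2$ since the two index sets are disjoint, so the bound equals $n\sqrt{(t+1)/t} = n\sqrt{(j+2)/(j+1)}$, as required. Combining with the Weyl step gives $\lambda_1(A) - \lambda_{n-j}(A) \le \tfrac n2 + \tfrac12 n\sqrt{\tfrac{j+2}{j+1}}$, which is exactly the claimed bound; dividing by $n$ and taking $n\to\infty$ yields the stated estimate for $s_{0,j}$.

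The conceptual heart --- and the only step requiring genuine insight rather than routine estimation --- is the substitution $B = 2A - J$. Applying $\sum_k \lambda_k(A)^2 \le n^2$ directly is too lossy, since for a nonnegative matrix $\lambda_1(A)$ is pinned near the top of the spectrum and that budget overcounts its contribution, producing a constant strictly larger than the truth. Shifting to the $\pm 1$ matrix $B$ both saturates the Frobenius identity $\|B\|_F^2 = n^2$ exactly and splits off the rank-one ``mean'' part $\tfrac12 J$ cleanly through Weyl, which is what makes the two Cauchy--Schwarz steps tight enough to land on the exact constant. The remaining bookkeeping --- verifying the Weyl and positive-semidefinite-monotonicity step and the disjointness of the index blocks --- is straightforward.
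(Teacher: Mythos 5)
Your proof is correct, and it takes a genuinely different route from the paper's. The paper works directly with the spectrum of $A$: it applies a weighted Cauchy--Schwarz inequality $(\lambda_1+|\lambda_{n-j}|)^2\le\bigl(1+\tfrac{1}{z(j+1)}\bigr)\bigl(\lambda_1^2+z(j+1)\lambda_{n-j}^2\bigr)$ with a free parameter $z>1$, controls the right-hand side via $\sum_k\lambda_k^2=nd$ and $\lambda_1\ge d$, maximizes the resulting quadratic $znd-(z-1)d^2$ in the average degree $d$, and finally optimizes over $z$ (the optimum being $z=1+\sqrt{\tfrac{j+2}{j+1}}$) to land on the constant. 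Your substitution $B=2A-J$ replaces all of that degree bookkeeping and the parameter optimization with the exact identity $\|B\|_F^2=n^2$, peels off the rank-one mean via Weyl's inequality and positive-semidefinite monotonicity, and then needs only two parameter-free Cauchy--Schwarz steps; each link in your chain checks out (including the disjointness of the index sets $\{1\}$ and $\{n-j,\dots,n\}$ once $j\le n-2$, and the separate treatment of $j=n-1$). The $\pm1$ shift is in fact the same device underlying Nikiforov's constructions $\tfrac12(K\otimes H+J)$ that the paper cites for the matching lower bounds, so your argument also makes the equality analysis more transparent. What the paper's version buys in exchange is uniformity: the identical weighted Cauchy--Schwarz template (without the parameter $z$) also proves Theorem~\ref{thm:genijub} for general pairs $(i,j)$, whereas your Weyl splitting is tailored to the case where $\lambda_1$ itself appears in the difference. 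One small quibble with your commentary: the paper's direct use of $\sum_k\lambda_k^2=nd$ is not actually lossy, since the terms $\lambda_1\ge d$ and the optimization over $z$ recover exactly the same constant; the two proofs are equally sharp, just organized differently.
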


The upper bound in Theorem~\ref{thm:genijub} can be attained in some cases. We show that the bound on $s_{k, k-1}$  is tight for infinitely many $k$. 

\begin{theorem}\label{thm:allk}
For all graphs $G^*$ on $n$ vertices with at most one loop per vertex, we have
\[\lambda_{k+1}(G^*) - \lambda_{n-k+1}(G^*) \le \frac{n}{\sqrt{2k}}.\]
Equality holds if there is a symmetric Hadamard matrix of order $2k$. In particular, if such a symmetric Hadamard matrix of order $2k$ exists, then we have $s_{k, k-1} = \frac{1}{\sqrt{2k}}$. 
\end{theorem}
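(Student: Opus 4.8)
The plan is to prove the upper bound via the same eigenvalue-trace machinery used in Theorem~\ref{thm:genijub}, then construct a matching extremal example from a symmetric Hadamard matrix. Setting $i=k$ and $j=k-1$ in Theorem~\ref{thm:genijub} gives exactly $\frac{n}{2}\sqrt{\frac{2k}{k\cdot k}} = \frac{n}{\sqrt{2k}}$, so the upper bound $\lambda_{k+1}(G^*) - \lambda_{n-k+1}(G^*) \le \frac{n}{\sqrt{2k}}$ is immediate from the previous theorem with no new work. The real content is the equality case.

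**For the lower bound**, I would exhibit a graph $G^*$ (equivalently a symmetric $(0,1)$-matrix $A$) for which equality holds, using a symmetric Hadamard matrix $H$ of order $2k$. The key observation behind Theorem~\ref{thm:genijub}'s bound is that equality forces the eigenvalue configuration to be extremal: the difference $\lambda_{k+1} - \lambda_{n-k+1}$ is maximized when the relevant eigenvalues cluster into exactly two levels with the right multiplicities. Concretely, I expect the extremal matrix to have the form of a block construction where $J_{n/(2k)} \otimes (\text{something built from } H)$ or $A = \frac{1}{2}(J + B)$ with $B$ a blown-up copy of $H$, so that $A$ has two distinct nonzero eigenvalues of equal magnitude $\frac{n}{2\sqrt{2k}}$ (with opposite signs) each of multiplicity related to $k$, together with a large block of zero eigenvalues. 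A symmetric Hadamard matrix of order $2k$ satisfies $HH^\top = H^2 = 2k\,I$, so its eigenvalues are $\pm\sqrt{2k}$; tensoring with an all-ones block and rescaling by $\frac{1}{2}$ to land in $\{0,1\}$ entries should produce precisely the eigenvalue multiplicities that make $\lambda_{k+1}$ and $\lambda_{n-k+1}$ sit at $\pm\frac{n}{2\sqrt{2k}}$.

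The steps, in order, are: (1) invoke Theorem~\ref{thm:genijub} with $(i,j)=(k,k-1)$ to get the upper bound and simplify the radical; (2) define the candidate matrix $A$ from $H$ via the appropriate Kronecker/blow-up construction, verifying it is a symmetric $(0,1)$-matrix (possibly with loops, hence a graph in $L_n$); (3) compute its spectrum explicitly using $H^2 = 2k\,I$ and the spectrum of the all-ones blocks, identifying the two extreme symmetric eigenvalues and the bulk of zeros; (4) check that the multiplicities line up so that $\lambda_{k+1}(A)$ is the positive extreme value and $\lambda_{n-k+1}(A)$ is its negative, giving the claimed difference $\frac{n}{\sqrt{2k}}$; (5) conclude $s_{k,k-1} = \frac{1}{\sqrt{2k}}$ by combining with the definition $s_{i,j} = \sup_n \frac{\spread_{i,j}(n)}{n}$ and noting the construction works for infinitely many $n$ (all multiples of $2k$).

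**The main obstacle** will be step (4): correctly tracking the eigenvalue multiplicities so that the \emph{$(k+1)$st} largest eigenvalue and the \emph{$(k-1)$th} smallest (i.e.\ $\lambda_{n-k+1}$) are exactly the two symmetric extreme values rather than eigenvalues buried in the zero bulk. This requires the Hadamard structure to deliver precisely $k$ eigenvalues at $+\frac{n}{2\sqrt{2k}}$ and $k$ at $-\frac{n}{2\sqrt{2k}}$ (or the analogous count after the blow-up), so that indices $k+1$ and $n-k+1$ land on the boundary between the extreme levels and the zero level. Verifying this count is where the symmetry of $H$ (not merely its being Hadamard) is essential, and it is the delicate bookkeeping that makes the bound tight rather than merely close.
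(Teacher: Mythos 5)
Your handling of the upper bound is exactly the paper's: substituting $(i,j)=(k,k-1)$ into Theorem~\ref{thm:genijub} gives $\tfrac{n}{2}\sqrt{2k/k^2}=\tfrac{n}{\sqrt{2k}}$, and no further work is needed there. The gap is in the construction. The candidates you name --- $J_{n/(2k)}\otimes(\text{something built from }H)$, or $\tfrac12(J+B)$ with $B$ a blowup of $H$ --- do not have the required spectrum, because $H\mathbf{1}\neq 0$ for a Hadamard matrix, so a blowup of $H$ neither annihilates the all-ones vector nor commutes with the rank-one matrix $\tfrac12 J$; the Perron direction then mixes with the Hadamard eigenvectors and the two-level structure collapses. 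Concretely, for $k=1$ the matrix $\tfrac12(J_2\otimes H_2+J_4)$ is a symmetric $(0,1)$-matrix with spectrum $\{1+\sqrt5,\,0,\,0,\,1-\sqrt5\}$, so $\lambda_2-\lambda_4=\sqrt5-1\approx 1.24$, far from the target $4/\sqrt2\approx 2.83$. In general, Weyl interlacing for the rank-one perturbation by $\tfrac12 J$ only yields $0\le\lambda_{k+1}\le\sqrt{2k}$ for such a matrix, which is not enough; your construction fails already at your step (3), before the multiplicity bookkeeping of step (4) begins.

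The missing idea is to tensor $H$ with a rank-one $\pm1$ matrix whose row sums vanish. The paper takes $K=\begin{pmatrix}1&-1\\-1&1\end{pmatrix}$ and sets $A=\tfrac12(K\otimes H+J_{4k})$ on $n=4k$ vertices; this is still a symmetric $(0,1)$-matrix since $K\otimes H$ has entries $\pm1$. Because $K\mathbf{1}_2=0$ and $J_{4k}=J_2\otimes J_{2k}$, one gets $(K\otimes H)J_{4k}=J_{4k}(K\otimes H)=0$, so the two summands are simultaneously diagonalizable: $A$ has eigenvalue $2k$ on $\mathbf{1}$; eigenvalues $\pm\sqrt{2k}$, each with multiplicity $k$, on the vectors $(1,-1)^{\top}\otimes w$ with $Hw=\pm\sqrt{2k}\,w$ (this uses $H^2=2kI$, i.e.\ the symmetry of $H$); and $0$ with multiplicity $2k-1$. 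Hence $\lambda_{k+1}-\lambda_{3k+1}=2\sqrt{2k}=n/\sqrt{2k}$, which is the equality you need, and one extremal $n$ suffices for $s_{k,k-1}$ since $s_{i,j}=\sup_n \spread_{i,j}(n)/n$. Your instinct that the delicate point is getting $k$ eigenvalues at each extreme level is correct, but without the sign-flip factor $K$ the decoupling that makes this count possible is absent.
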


If $k = 1$, then we can additionally determine the unique graphs with loops which attain the upper bound of Theorem~\ref{thm:allk}. Recall that the \emph{$t$-blowup} of a graph $G^*$ is the graph $G^*(t)$ where the vertices of $G^*$ are replaced with independent sets of size $t$ and the edges are replaced with complete bipartite graphs $K_{t, t}$; a more formal definition of graph blowups is given in Section 2. 

\begin{theorem}\label{t1}
Among all graphs $G^*$ with possible self-loops on $n$ vertices, we have
\[\lambda_2(G^*)-\lambda_n(G^*)\leq \frac{n}{\sqrt{2}}.\]
Equality holds if and only if $G^*$ is a $t$-blowup of the closed path $P^*_4$ in Figure~\ref{fig:P4*}.

\begin{figure}[ht]
    \centering
     \tikz[scale=2.5, vertex/.style={scale=0.5, circle, draw=black, fill=black},
wvertex/.style={scale=0.5, circle, draw=black, fill=white}]{
\node[vertex] (v1) at (0,0) {};
\node[wvertex](v2) at (1,0 ) {};
\node[wvertex] (v3) at (2,0) {};
\node[vertex] (v4) at (3,0) {};
\draw (v1)--(v2);
\draw (v2)--(v3);
\draw (v3)--(v4);
}
    \caption{Closed path $P^*_4$.}
    \label{fig:P4*}
\end{figure}
In particular, we have $s_{1,0}=\frac{1}{\sqrt{2}}$.
\end{theorem}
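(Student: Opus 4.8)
The plan is to establish the upper bound using Theorem~\ref{thm:allk} with $k=1$ (which already gives $\lambda_2(G^*) - \lambda_n(G^*) \le n/\sqrt{2}$), and then concentrate all effort on the characterization of equality. First I would set up the spectral framework: if $\lambda_2 - \lambda_n = n/\sqrt{2}$, then tracing through the proof of Theorem~\ref{thm:allk} forces tightness in each inequality used there. The key structural facts I expect to extract are that the eigenvalues must be highly constrained---specifically, I anticipate that equality forces $\lambda_1 = \lambda_2 = n/(2\sqrt 2)$ and $\lambda_n = \lambda_{n-1} = -n/(2\sqrt 2)$, with all remaining eigenvalues equal to $0$. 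This would mean the adjacency matrix $A$ of $G^*$ (with loops, so a symmetric $(0,1)$-matrix) has rank exactly $4$ and spectrum $\{(n/2\sqrt2)^{(2)}, 0^{(n-4)}, (-n/2\sqrt2)^{(2)}\}$.

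Next I would exploit this rigid spectrum to pin down $A$ combinatorially. Since $A$ has only the three distinct eigenvalues $\pm n/(2\sqrt2)$ and $0$, the matrix $A^2$ has eigenvalues $n^2/8$ (with multiplicity $4$) and $0$, so $A^2 = \frac{n^2}{8}P$ where $P$ is the projection onto the $4$-dimensional eigenspace. The crucial observation is that the $(0,1)$-entries of $A$ together with the low rank should force a block structure: rows of $A$ take only a few distinct values, which partitions the vertices into at most four classes. I would argue that vertices with identical rows form the independent sets of a blowup, reducing $A$ to a small \emph{quotient} matrix $B$ acting on the classes, weighted by the class sizes. The problem then becomes: classify all symmetric $(0,1)$ quotient patterns on $4$ classes whose weighted adjacency matrix has the prescribed spectrum. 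Matching the eigenvalue $0$ with multiplicity forces the classes to be \emph{equitable} and equal in size (giving the uniform blowup parameter $t$ with $n = 4t$), and matching $\pm n/(2\sqrt2)$ on the $4 \times 4$ quotient identifies the pattern as precisely $P_4^*$ with loops at the two endpoints.

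The hard part will be the rigidity argument that converts the low rank of $A$ into an exact blowup structure and eliminates all patterns other than $P_4^*$. Low rank alone forces repeated rows, but I must show the distinct row-types are mutually \emph{disjoint in support or identical}, so that they genuinely form a blowup rather than a weighted graph with overlapping neighborhoods; this likely uses the integrality and the $0$-eigenvalue multiplicity to force an equitable partition with constant-sized cells. Once reduced to $4 \times 4$ symmetric $(0,1)$ matrices, the verification that $P_4^*$ is the unique pattern (up to isomorphism) achieving spectrum $\{\pm 1/\sqrt2\}$ scaled appropriately is a finite check, but care is needed to rule out patterns giving the same characteristic polynomial yet a different loop placement. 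I would handle this by computing the characteristic polynomial of the general symmetric $(0,1)$ quotient and showing the spectral constraints---trace equal to the number of loops, $\tr(A^2)$ equal to twice the edge count plus loop count---leave only the $P_4^*$ configuration. Finally, I would confirm that the $t$-blowup of $P_4^*$ indeed realizes the bound for every $t$, yielding $s_{1,0} = 1/\sqrt2$.
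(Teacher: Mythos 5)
Your high-level strategy---extract the spectrum forced by equality in Theorem~\ref{thm:allk} and then use spectral rigidity to reconstruct the graph---is the same as the paper's, but you have misidentified the forced spectrum, and this error is fatal to everything built on it. Tracing the equality conditions in the proof of Theorem~\ref{thm:genijub} with $i=1$, $j=0$: the Cauchy--Schwarz step only forces $\lambda_2=|\lambda_n|$, while the chain $\lambda_2^2+\lambda_n^2\le nd-\lambda_1^2\le nd-d^2\le n^2/4$ forces $\lambda_1=d=n/2$ and all eigenvalues other than $\lambda_1,\lambda_2,\lambda_n$ to vanish. So the forced spectrum is $\lambda_1=n/2$, $\lambda_2=n/(2\sqrt2)$, $\lambda_n=-n/(2\sqrt2)$, rest zero: rank $3$, trace $n/2$, hence exactly half the vertices carry loops. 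Your claimed spectrum $\{(n/(2\sqrt2))^{(2)},\,0^{(n-4)},\,(-n/(2\sqrt2))^{(2)}\}$ is actually impossible for a symmetric $(0,1)$ matrix: it gives $\mathrm{tr}(A^2)=n^2/2$, so $d=n/2$ and hence $\lambda_1\ge d=n/2>n/(2\sqrt2)$, a contradiction; it also does not match the spectrum of $P_4^*(t)$, which is $\{2t,\sqrt2 t,0^{(4t-3)},-\sqrt2 t\}$. Consequently $A^2$ is not a multiple of a rank-$4$ projection, and the classification you propose on that premise cannot get started; pursued literally, it would conclude that equality never holds.

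Separately, the step you yourself flag as ``the hard part''---converting low rank into an exact blowup structure and eliminating all other quotient patterns---is left as a sketch, and it is precisely where the real content lies; low rank alone does not give an equitable partition into equal cells. The paper's proof does this concretely: it writes $A=\frac12 J+\frac{\sqrt2}{4}n(v_2v_2'-v_nv_n')$, squares to get $A^2-\frac{n}{4}J=\frac{n^2}{8}(v_2v_2'+v_nv_n')$, solves these two identities for $v_2v_2'$, and reads off the diagonal to find that $|v_2(i)|$ takes exactly two values determined by whether $i$ has a loop. Orthogonality of $v_2$ and $v_n$ to $\mathbf{1}$ and to each other then forces four sign classes each of size $n/4$, and substituting back reconstructs $A$ entrywise as the block adjacency matrix of $P_4^*(t)$. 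If you correct the spectrum and supply an argument of this kind for the rigidity step, your outline can be repaired; as written, it does not prove the theorem.
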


\begin{remark}
The eigenvalues of $P^*_4$ are $2, \sqrt{2},0,-\sqrt{2}$. The eigenvalues
of the blowup $P^*_4(t)$ are $2t$, $\sqrt{2}t$,$0,\ldots, 0$, and $-\sqrt{2}t$. Thus,
we have,
\[\lambda_2(P^*_4(t))-\lambda_n(P^*_4(t))=2\sqrt{2}t=\frac{n}{\sqrt{2}}.\]
\end{remark}

The proofs of Theorems~\ref{thm:genijub}, \ref{thm:0jthm}, \ref{thm:allk} and \ref{t1} are given in Section 3. In Section 4, we list lower bounds for $s_{i, j}$ for many different pairs $(i, j)$ and make a number of conjectures for precise values of $s_{i, j}$. 

\section{Notations and Lemmas}
Let $G^*$ be a graph (possibly with at most one loop per vertex) with vertex set $V(G^*) = \{v_1, \ldots, v_n\}$ and edge set $E$. Let $A = (a_{ij})$ be the adjacency matrix of $G^*$, which is defined so that $a_{ij} = 1$ if $ij \in E(G^*)$ and $a_{ij} = 0$ if $ij \notin E(G^*)$. In particular, if there is a loop at vertex $i$ in the graph $G^*$, then $a_{ii} = 1$.  Note that $A$ is a real-symmetric matrix with each entry $0$ or $1$, so its eigenvalues are all real, and we denote the eigenvalues in nonincreasing order as $\lambda_1 \ge \lambda_2 \ge \cdots \ge \lambda_n$. 

The \emph{degree} of vertex $i$ in the graph $G^*$ is $d_i = \sum_{j=1}^na_{ij}$, and the \emph{average degree} of the graph $G^*$ is $d =\frac{\sum_{i=1}^nd_i}{n}$. Under these conventions, it no longer holds that the sum of the degrees of $G^*$ is equal to twice the number of edges, as is the case for simple loopfree graphs. However, it still holds that the spectral radius of $G^*$ satisfies the inequality $\lambda_1 \ge d$ and that the sum of the squares of the eigenvalues of $G^*$ satisfies $\lambda_1^2 + \ldots + \lambda_n^2 = \text{Tr}(A^2) = nd$.

 For a positive integer $t$, let $G^*(t)$ be the graph with vertex set $V(G^*(t)) = \{ v_{i, j}: 1\le i\le n, 1\le j\le t\}$ and edge set $E(G^*(t)) = \{v_{i_1, j_1}v_{i_2,j_2}: i_1i_2 \in E\}$. The graph $G^*(t)$ is called the \emph{$t$-blowup} of the graph $G^*$. 

\begin{lemma}[Eigenvalues of the blowup graph]\label{lem:eigsblowup}
Suppose the eigenvalues of $G^*$ are $\lambda_1 \ge \cdots \ge \lambda_n$. Then, the eigenvalues of $G^*(t)$ are $\lambda_1t, \lambda_2t, \ldots, \lambda_nt$ with $(t-1)n$ additional $0$s. 
\end{lemma}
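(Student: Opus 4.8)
The plan is to recognize the adjacency matrix of the blowup $G^*(t)$ as a Kronecker product and then read off its spectrum from the known spectra of the two factors. Writing $A$ for the $n\times n$ adjacency matrix of $G^*$ and $J_t$ for the $t\times t$ all-ones matrix, I would first verify that the adjacency matrix $B$ of $G^*(t)$ equals $A\otimes J_t$ under the indexing $v_{i,j}\leftrightarrow (i,j)$. Indeed, the entry of $A\otimes J_t$ indexed by $\big((i_1,j_1),(i_2,j_2)\big)$ is $a_{i_1 i_2}(J_t)_{j_1 j_2}=a_{i_1 i_2}$, which equals $1$ exactly when $i_1 i_2\in E$, matching the edge set $E(G^*(t))$. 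The one point demanding care is the loop convention: a loop at vertex $i$ means $a_{ii}=1$, so the corresponding diagonal block $a_{ii}J_t=J_t$ is all ones, i.e.\ a complete graph on the $t$ copies together with a loop at each copy, which is precisely what the blowup definition $\{v_{i,j_1}v_{i,j_2}:ii\in E\}$ (allowing $j_1=j_2$) prescribes.

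Next I would invoke the standard fact that the eigenvalues of a Kronecker product $A\otimes J_t$ are the products $\lambda_i\mu_j$, where $\lambda_i$ ranges over the eigenvalues of $A$ and $\mu_j$ over those of $J_t$, with corresponding eigenvectors $u_i\otimes w_j$. Since $J_t$ has rank one, its eigenvalues are $t$ (simple, with eigenvector $\mathbf{1}_t$) and $0$ with multiplicity $t-1$. Pairing the eigenvalue $t$ with each $\lambda_i$ yields the $n$ eigenvalues $\lambda_1 t,\ldots,\lambda_n t$, while pairing the $(t-1)$-fold eigenvalue $0$ with each $\lambda_i$ yields $n(t-1)$ additional zeros. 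These $n+n(t-1)=nt$ products account for the full spectrum of the $nt\times nt$ matrix $B$, giving exactly the claimed list.

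If one prefers a self-contained argument avoiding Kronecker-product machinery, the same conclusion follows by exhibiting the eigenvectors directly. For each eigenpair $(\lambda_i,u)$ of $A$, the vector $\tilde u$ that is constant equal to $u_{i'}$ on the $i'$-th block satisfies $(B\tilde u)_{i_1,j_1}=\sum_{i_2}a_{i_1 i_2}\big(\sum_{j_2}u_{i_2}\big)=t\lambda_i u_{i_1}$, so it is an eigenvector with eigenvalue $\lambda_i t$. For the kernel, fix a block $i'$ and a vector $w\in\mathbb{R}^t$ with $\sum_j w_j=0$, and let $\psi$ be supported on block $i'$ with those entries; then every coordinate of $B\psi$ sums $w$ over a full block and hence vanishes. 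Choosing an orthonormal eigenbasis of the symmetric matrix $A$ and orthogonal $w$'s makes the $n$ vectors of the first type and the $n(t-1)$ of the second type mutually orthogonal, so they form a complete eigenbasis. I do not anticipate a genuine obstacle: the only delicate point is matching the loop convention to the diagonal blocks of $A\otimes J_t$, and then confirming that the $nt$ eigenvectors produced are linearly independent so that no eigenvalue is omitted.
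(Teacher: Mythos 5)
Your proposal is correct and follows essentially the same route as the paper: identify the adjacency matrix of $G^*(t)$ as $A\otimes J_t$ and read off the spectrum from the standard Kronecker-product fact together with the spectrum $\{t,0^{t-1}\}$ of $J_t$. Your extra care with the loop convention and the optional direct eigenvector construction are sound elaborations of what the paper leaves as "standard facts," but they do not constitute a different method.
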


\begin{proof}
    The adjacency matrix of $G^*(t)$ is $A(G^*(t)) = A \otimes J_t$, where $\otimes$ is the Kronecker product of matrices and $J_t$ is the $t\times t$ all ones matrix. The lemma immediately follows from standard facts about the spectrum of the Kronecker product. 
 \end{proof}

\begin{lemma}\label{lem:simeigs}
Let $G^*$ be a graph on $n$ vertices with at most one self-loop at each vertex. Let $G$ be the underlying simple graph obtained by removing all the self-loops from $G^*$ (and keeping all of the vertices). Then, for $1\le k\le n$, $\lambda_k(G^*) - 1 \leq \lambda_k(G) \leq \lambda_k(G^*)$. 
\end{lemma}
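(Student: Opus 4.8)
The plan is to realize both matrices as a symmetric perturbation of one another and then invoke Weyl's inequalities. Write $A(G^*) = A(G) + D$, where $D$ is the diagonal matrix whose $(i,i)$ entry is $1$ if there is a loop at $v_i$ and $0$ otherwise. This is exactly the matrix identity encoding the convention that looped vertices contribute a $1$ on the diagonal, and it holds entrywise because deleting the loops from $G^*$ changes only the diagonal of the adjacency matrix. The key structural observation is that $D$ is a diagonal $0/1$ matrix, hence symmetric and positive semidefinite, with every eigenvalue lying in $\{0,1\}$; in particular $\lambda_n(D) \ge 0$ and $\lambda_1(D) \le 1$.

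Next I would apply Weyl's inequalities for the eigenvalues of a sum of symmetric matrices. Taking $A = A(G)$ and $B = D$, the two one-sided Weyl bounds give, for every $1 \le k \le n$,
\[
\lambda_k(A(G)) + \lambda_n(D) \;\le\; \lambda_k\big(A(G) + D\big) \;\le\; \lambda_k(A(G)) + \lambda_1(D).
\]
Substituting $A(G) + D = A(G^*)$ together with the bounds $0 \le \lambda_n(D)$ and $\lambda_1(D) \le 1$ yields
\[
\lambda_k(G) \;\le\; \lambda_k(G^*) \;\le\; \lambda_k(G) + 1.
\]
The left inequality is precisely $\lambda_k(G^*) \ge \lambda_k(G)$, and rearranging the right inequality gives $\lambda_k(G) \ge \lambda_k(G^*) - 1$, which together are the claimed chain $\lambda_k(G^*) \ge \lambda_k(G) \ge \lambda_k(G^*) - 1$.

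There is essentially no serious obstacle here: the argument is a direct consequence of interlacing-type estimates, and the only point requiring care is invoking the correct form of Weyl's inequality (upper bound via $\lambda_1(D)$, lower bound via $\lambda_n(D)$) and confirming that the loop matrix $D$ has spectrum contained in $[0,1]$. If one prefers to avoid quoting Weyl, the same two inequalities follow directly from the Courant–Fischer min–max characterization of $\lambda_k$, since adding the positive semidefinite matrix $D$ can only raise each Rayleigh quotient, while subtracting $D$ lowers it by at most $\lambda_1(D) \le 1$ on any subspace; either route produces the bound in a few lines.
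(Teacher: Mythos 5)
Your proof is correct and follows essentially the same route as the paper: both decompose $A(G^*) = A(G) + D$ with $D$ a diagonal $(0,1)$-matrix and bound the eigenvalue shift by the spectrum of $D$ (the paper uses the Rayleigh quotient for the lower perturbation bound and Weyl for the upper; you use the two one-sided Weyl inequalities, which is the same estimate).
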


\begin{proof}
    Let $A$ be the adjacency matrix of $G^*$ and let $B$ be the adjacency matrix of $G$. Then, $A = B + D$, where $D$ is a diagonal $(0, 1)$-matrix. Since $D$ is positive semidefinite, by the Rayleigh quotient $\lambda_k(G) \le \lambda_k(G^*)$. On the other hand, $\lambda_1(D) \le 1$, so by Weyl's inequality,
    \[\lambda_k(G^*) - 1 \leq \lambda_k(G^*) - \lambda_1(D) \leq \lambda_k(G).\]
\end{proof}

Lemma~\ref{lem:simeigs} shows that in the limit there is no loss of information in working with the set of graphs $L_n$ with at most one self-loop per vertex instead of the set of simple graphs $S_n$. 

\begin{lemma}\label{lem:sij=sij*}
The limit $s_{i,j}^*$ exists and 
\[s_{i,j}^*=s_{i,j}.\]
\end{lemma}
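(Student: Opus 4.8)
The plan is to sandwich $\spread^*_{i,j}(n)$ between $\spread_{i,j}(n)$ and $\spread_{i,j}(n)+1$, and then invoke the already-established existence of the limit $s_{i,j}$ (Nikiforov) to squeeze. Since $S_n\subseteq L_n$, one immediately has $\spread_{i,j}(n)\le \spread^*_{i,j}(n)$. The only real work is the matching upper bound $\spread^*_{i,j}(n)\le \spread_{i,j}(n)+1$, which will come directly from Lemma~\ref{lem:simeigs}.

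For that upper bound, I would fix an arbitrary $G^*\in L_n$ and let $G$ be its underlying loopless simple graph. The key is to apply the two inequalities of Lemma~\ref{lem:simeigs} in opposite directions to the two eigenvalues appearing in the spread. For the top eigenvalue I would use $\lambda_{i+1}(G)\ge \lambda_{i+1}(G^*)-1$, i.e.\ $\lambda_{i+1}(G^*)\le \lambda_{i+1}(G)+1$; for the bottom eigenvalue I would use $\lambda_{n-j}(G^*)\ge \lambda_{n-j}(G)$, i.e.\ $-\lambda_{n-j}(G^*)\le -\lambda_{n-j}(G)$. Adding these gives
\[
\spread_{i,j}(G^*)=\lambda_{i+1}(G^*)-\lambda_{n-j}(G^*)\le \lambda_{i+1}(G)+1-\lambda_{n-j}(G)=\spread_{i,j}(G)+1\le \spread_{i,j}(n)+1,
\]
where the last step uses $G\in S_n$. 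Taking the maximum over all $G^*\in L_n$ yields $\spread^*_{i,j}(n)\le \spread_{i,j}(n)+1$, as desired.

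Combining the two bounds gives, after dividing by $n$, the chain $\frac{\spread_{i,j}(n)}{n}\le \frac{\spread^*_{i,j}(n)}{n}\le \frac{\spread_{i,j}(n)}{n}+\frac{1}{n}$. Because $\frac{\spread_{i,j}(n)}{n}\to s_{i,j}$ and $\frac1n\to 0$, both outer sequences converge to $s_{i,j}$, so by the squeeze theorem the middle sequence converges as well; hence $s^*_{i,j}$ exists and equals $s_{i,j}$. I do not expect to need the blowup lemma (Lemma~\ref{lem:eigsblowup}) for this statement, since the existence of $s_{i,j}$ is already granted.

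There is no substantial obstacle here: the entire spectral content is packaged in Lemma~\ref{lem:simeigs}, and the rest is a squeeze argument. The one point demanding care is the \emph{bookkeeping of the interlacing directions}—one must bound $\lambda_{i+1}(G^*)$ from above but $\lambda_{n-j}(G^*)$ from below, so that the diagonal perturbation $D$ contributes an additive error of only $+1$ to the spread rather than $+2$; choosing the wrong inequality for either term would still prove the result but with a weaker (though ultimately harmless) constant.
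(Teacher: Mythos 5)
Your proposal is correct and follows essentially the same route as the paper: both arguments sandwich the $(i,j)$-spread of $G^*$ between that of its underlying simple graph plus or minus an additive constant via Lemma~\ref{lem:simeigs}, divide by $n$, and apply the squeeze theorem using Nikiforov's existence result for $s_{i,j}$. Your version is in fact slightly more careful than the paper's, since you explicitly pass from the per-graph inequality to the maxima $\spread_{i,j}(n)$ and $\spread^*_{i,j}(n)$ before taking limits, and you track the interlacing directions to get the sharper $+1$ error.
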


\begin{proof}

If $G^*$ is a graph on $n$ vertices with at most one loop per vertex, and $G$ is the underlying graph obtained by removing the loops from $G^*$, then Lemma~\ref{lem:simeigs} gives

\[\frac{\lambda_{i+1}(G) - \lambda_{n-j}(G)}{n} - \frac{1}{n} \le \frac{\lambda_{i+1}(G^*) - \lambda_{n-j}(G^*)}{n} \le \frac{\lambda_{i+1}(G) - \lambda_{n-j}(G)}{n} + \frac{1}{n}.\]

By taking limits as $n\rightarrow \infty$, the squeeze theorem shows $s_{i, j}^*$ exists and $s_{i, j}^* = s_{i, j}$. 
\end{proof}

\section{Proofs of the theorems}
We first give the proofs of Theorems \ref{thm:genijub} and \ref{thm:0jthm}. The upper bounds follow from applications of the Cauchy-Schwarz inequality. 

\begin{proof}[Proof of Theorem~\ref{thm:genijub}]
    Assume $\lambda_{i+1} \ge 0 \ge \lambda_{n-j}$. Then, by the Cauchy-Schwarz inequality, we have 
    \[(\lambda_{i+1} - \lambda_{n-j})^2 = (\lambda_{i+1} + |\lambda_{n-j}|)^2 \le \left(\frac{1}{i} + \frac{1}{j+1}\right)(i\lambda_{i+1}^2 + (j+1)\lambda_{n-j}^2)\]
    \[\implies  \lambda_{i+1} - \lambda_{n-j}  \le \sqrt{\left(\frac{1}{i}+\frac{1}{j+1}\right)(i\lambda_{i+1}^2 + (j+1)\lambda_{n-j}^2)}.\]

    We have $i\lambda_{i+1}^2 \le \lambda_2^2 + \ldots + \lambda_{i+1}^2$ and $(j+1)\lambda_{n-j}^2 \le \lambda_{n-j}^2 + \cdots + \lambda_n^2$. Since $\lambda_1^2 + \cdots + \lambda_n^2 = nd$ and $\lambda_1 \ge d$, it follows that 
    \begin{align*}
        i\lambda_{i+1}^2 + (j+1)\lambda_{n-j}^2 & \le \lambda_2^2 + \cdots + \lambda_{i+1}^2 + \lambda_{n-j}^2 + \cdots + \lambda_n^2\\
        & \le (\lambda_1^2 + \cdots + \lambda_n^2) - \lambda_1^2\\
        & = nd - \lambda_1^2\\
        & \le nd - d^2\\
        &\le \frac{n^2}{4},
    \end{align*}
    where the last inequality follows from the fact that $nd-d^2$ is maximized at $d=n/2$. 
    Hence, 
    \begin{align*}
        \lambda_{i+1} - \lambda_{n-j} & \le \sqrt{\left(\frac{1}{i}+\frac{1}{j+1}\right)(i\lambda_{i+1}^2 + (j+1)\lambda_{n-j}^2)}\\
        & \le \sqrt{\left(\frac{i+j+1}{i(j+1)}\right)\left(\frac{n^2}{4}\right)}\\
        &= \frac{n}{2}\sqrt{\frac{i+j+1}{i(j+1)}}.
    \end{align*}

     Equality holds in Theorem~\ref{thm:genijub} if and only if it holds at each step in the argument, so in particular equality holds only if $\lambda_1 = d = \frac{n}{2}$, $\lambda_2 = \cdots = \lambda_{i+1} = \frac{n}{2}\sqrt{\frac{j+1}{i(i+j+1)}}$, $\lambda_{n-j} = \cdots = \lambda_{n} = -\frac{n}{2}\sqrt{\frac{i}{(j+1)(i+j+1)}}$ and $\lambda_{i+2} = \cdots = \lambda_{n-j-1} = 0$.

    If $\lambda_{i+1}$ and $\lambda_{n-j}$ have the same sign, or if $\lambda_{i+1} \le 0 \le \lambda_{n-j}$, then $\lambda_{i+1} - \lambda_{n-j} \le \max\{|\lambda_{i+1}|, |\lambda_{n-j}|\}$. We have $|\lambda_{i+1}| \le \lambda_{i+1}^*$ and $|\lambda_{n-j}| \le \lambda_{j+2}^*$, where $\lambda_{i+1}^*$ is the $(i+1)$th largest singular value of $G^*$, so (as proved by Nikiforov~\cite{Nik15}),
    \[\lambda_{i+1}^*\le \frac{n}{2\sqrt{i}}\]
    and
    \[\lambda_{j+2}^*\le \frac{n}{2\sqrt{j+1}}.\]
    In either case, the bound is less than $\frac{n}{2}\sqrt{\frac{i+j+1}{i(j+1)}}$, so equality cannot hold.  

\end{proof}

We now give the proof of Theorem~\ref{thm:0jthm}.
\begin{proof}[Proof of Theorem~\ref{thm:0jthm}]
If $\lambda_{n-j}(G^*) \ge 0$, then $\lambda_1(G^*) - \lambda_{n-j}(G^*)\le \lambda_1(G^*) \le n < \frac{n}{2}\left(1+\sqrt{\frac{j+2}{j+1}}\right)$.

Assume now that $\lambda_{n-j}(G^*) < 0$. 
Let $z>1$ be a real number chosen later.  Then, by the Cauchy-Schwarz inequality, we have 
\[(\lambda_1 - \lambda_{n-j})^2 = (\lambda_1 + |\lambda_{n-j}|)^2 \le \left(1+\frac{1}{z(j+1)}\right)(\lambda_1^2 + z(j+1)\lambda_{n-j}^2) \]
\[ \implies \lambda_1 - \lambda_{n-j}    \le \sqrt{\left(1+\frac{1}{z(j+1)}\right)(\lambda_1^2 + z(j+1)\lambda_{n-j}^2)}.\\ \]
Note that $z(j+1)\lambda_{n-j}^2 \le z(\lambda_{n-j}^2 + \ldots +\lambda_n^2)$, so since $\lambda_1^2 + \cdots + \lambda_n^2 = nd$ and $\lambda_1 \ge d$, we have
\begin{align*}
    \lambda_1^2 + z(j+1)\lambda_{n-j}^2 & \le \lambda_1^2 + z(\lambda_{n-j}^2 + \ldots + \lambda_n^2)\\
    & \le z(\lambda_1^2 + \cdots + \lambda_n^2) - (z-1)\lambda_1^2\\
    & = znd - (z-1)\lambda_1^2\\
    & \le znd - (z-1)d^2\\
    & \le \frac{z^2n^2}{4(z-1)},
\end{align*}
where the last inequality follows from the fact that $znd - (z-1)d^2$ is maximized at $d = zn/(2(z-1))$. Therefore, we obtain 
\begin{align*}
    \lambda_1 - \lambda_{n-j}   & \le  \sqrt{ \left(1+\frac{1}{z(j+1)}\right) \frac{z^2n^2}{4(z-1)}}\\
    &=\frac{n}{2}\sqrt{ \left(1+\frac{1}{z(j+1)}\right) \frac{z^2}{(z-1)}}.
\end{align*}
Let $f(z)= \sqrt{\left(1+\frac{1}{z(j+1)}\right) \frac{z^2}{(z-1)}}$. It turns out $f(z)$ reaches the minimum 
value $1+\sqrt{\frac{j+2}{j+1}}$
 at  $z=1+\sqrt{\frac{j+2}{j+1}}$. 
 By setting $z=1+\sqrt{\frac{j+2}{j+1}}$ and simplifying, we have
\[ \lambda_1 - \lambda_{n-j}\leq 
\frac{n}{2}\, f\left (1+\sqrt{\frac{j+2}{j+1}}\right )=\frac{n}{2}\left(1 + \sqrt{\frac{j+2}{j+1}}\right).\]

\end{proof}

Let $i=k$ and $j=k-1$. Then, by Theorem~\ref{thm:genijub}, we have
\[\lambda_{k+1}(G^*) - \lambda_{n-k+1}(G^*) \le \frac{n}{\sqrt{2k}},\]
for all graphs on $n$ vertices with at most one loop per vertex. We can show tightness of the bound for infinitely many $k$. Suppose that there is a symmetric Hadamard matrix $H$ of order $2k$. Let $K = \begin{pmatrix}1 & -1\\ -1 & 1\end{pmatrix}$. 
Then, the graph $G^*$ on $4k$ vertices whose adjacency matrix $A$ is the  $(0, 1)$-symmetric matrix given by 
\[A = \frac12(K\otimes H + J_{4k})\]
has the appropriate eigenvalues needed for equality in Theorem~\ref{thm:genijub}. This proves Theorem~\ref{thm:allk}. (Nikiforov~\cite{Nik15} first used matrices of this form to give tight or near-tight constructions for related problems such as maximizing the $k$th singular value of a graph on $n$ vertices). 

When $k=1$, using the Hadamard matrix \[H_2 = \begin{pmatrix}1 & 1\\ 1 & -1\end{pmatrix},\] the graph with adjacency matrix $\frac12(K\otimes H_2 + J_4)$ is $P_4^*$. When $k=2$, using the unique Hadamard matrix $H_4$ of order $4$, the graph with adjacency matrix $\frac12(K\otimes H_4 + J_8)$ is the closed cube $Q_3^*$ shown in Figure~\ref{fig:closed-cube}. 

\begin{figure}[ht]
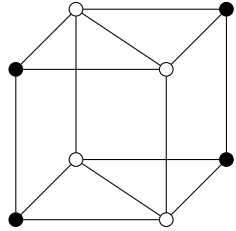

   \centering
     \tikz[scale=2.5, vertex/.style={scale=0.5, circle, draw=black, fill=black},
wvertex/.style={scale=0.5, circle, draw=black, fill=white}]{
\node[vertex] (v1) at (0,0) {};
\node[wvertex](v2) at (1,0 ) {};
\node[vertex] (v3) at (1.4,0.4) {};
\node[wvertex] (v4) at (0.4,0.4) {};
\node[vertex] (v5) at (0,1) {};
\node[wvertex](v6) at (1,1 ) {};
\node[vertex] (v7) at (1.4,1.4) {};
\node[wvertex] (v8) at (0.4,1.4) {};
\draw (v1)--(v2);
\draw (v2)--(v3);
\draw (v3)--(v4);
\draw (v1)--(v4);
\draw (v2)--(v4);
\draw (v5)--(v6);
\draw (v6)--(v7);
\draw (v7)--(v8);
\draw (v6)--(v8);
\draw (v1)--(v5);
\draw (v2)--(v6);
\draw (v3)--(v7);
\draw (v4)--(v8);
\draw (v5)--(v8);
}
    \caption{Closed cube $Q^*_3$.}
    \label{fig:closed-cube}
\end{figure}

We now give the proof of Theorem~\ref{t1}, which further extends Theorem~\ref{thm:allk} by showing that when $k=1$, the blowups of the graph $P_4^*$ obtained from the symmetric Hadamard matrix of order $2$ are the unique extremal graphs meeting the bound.

\begin{proof}[Proof of Theorem~\ref{t1}]
    By the proof of Theorem~\ref{thm:allk}, equality holds when
    $\lambda_1=d=\frac{n}{2}$, 
    $\lambda_2=\frac{\sqrt{2}}{4}n$,
    $\lambda_3=\cdots=\lambda_{n-1}=0$, and
    $\lambda_n=-\frac{\sqrt{2}}{4}n$.

 In the remainder of the proof, we will show that any graph with the above spectrum must be a blowup of $P^*_4$.  
Since $\lambda_1=d$, $G^*$ must be a regular graph.

Note that $\tr(A)=\sum_{i=1}^n \lambda_i=\frac{n}{2}$. The order $n$ must be even and half of the vertices must have a self-loop while the other half are loopless. Let $S$ denote the set of vertices with a self-loop and let $\bar S$ be the set of vertices without loops.

For $i=1,\dots,n$, let $v_i$ be the orthogonal unit eigenvector corresponding to $\lambda_i$.
In particular, we have $v_1=\frac{1}{\sqrt{n}}{\bf 1}$, where $\bf 1$ is the all ones vector.
We have
\[A=\sum_{i=1}^n \lambda_i v_i v_i'   
= \frac{1}{2} J + \frac{\sqrt{2}}{4}n v_2 v_2'
-\frac{\sqrt{2}}{4}n v_nv_n'.
\]
In particular, we have
\begin{equation}\label{eq:A1}
  A-\frac{1}{2}J=  \frac{\sqrt{2}}{4}n (
v_2 v_2'-v_n v_n'). 
\end{equation}

Squaring both sides, we get
\[A^2-AJ +\frac{1}{4}J^2 = \frac{n^2}{8}
(v_2 v_2'-v_n v_n')^2.\]
Note that $AJ=\frac{n}{2}J$, $J^2=nJ$,
and $(v_2 v_2'-v_n v_n')^2=v_2 v_2'+v_n v_n'$.
We get
\begin{equation}\label{eq:A2}
A^2-\frac{n}{4}J=\frac{n^2}{8}(v_2 v_2'+v_n v_n').
\end{equation}
Combining Equations \eqref{eq:A1} and \eqref{eq:A2}, we have
\begin{equation*}
    v_2v_2'= \frac{4}{n^2}\left(A^2-\frac{n}{4}J\right) + \frac{\sqrt{2}}{n}\left(A-\frac{1}{2}J\right).
\end{equation*}
Taking the values at the $i$th diagonal entry, we have
\begin{align*}
    v_2(i)^2 &= \frac{4}{n^2}\left(\frac{n}{2}-\frac{n}{4}\right) +  \frac{\sqrt{2}}{n} \left({\bf 1}_{S(i)}-\frac{1}{2}\right)\\
    &=\begin{cases}
        \frac{1}{n}\left(1+ \frac{1}{\sqrt{2}}\right) & \mbox{ if } i\in S; \\
         \frac{1}{n}\left(1- \frac{1}{\sqrt{2}}\right) &\mbox{ if } i\in \bar S.
    \end{cases}
\end{align*}
Here ${\bf 1}_{S(i)}$ takes value 1 at $i\in S$ and $0$ otherwise.

Similarly, we have
\[v_n(i)^2
=\begin{cases}
        \frac{1}{n}\left(1- \frac{1}{\sqrt{2}}\right) & \mbox{ if } i\in S; \\
         \frac{1}{n}\left(1+ \frac{1}{\sqrt{2}}\right) &\mbox{ if } i\in \bar S.
    \end{cases}
\]
Let $S^+$ (respectively $\bar S^+)$ be the set of vertices in $S$ (respectively $\bar S$) where $v_2(i)$ takes a positive value while
$S^-$ (respectively $\bar S^-$) be the set of vertices in $S$ (respectively $\bar S$) where $v_2(i)$ takes a negative value. 
Then we have
\[
v_2(i)=
\begin{cases}
        \frac{1}{\sqrt{n}}\sqrt{1+ \frac{1}{\sqrt{2}}} & \mbox{ if } i\in S^+; \\
       \frac{1}{\sqrt{n}}\sqrt{1- \frac{1}{\sqrt{2}}} & \mbox{ if } i\in \bar S^+; \\
         -\frac{1}{\sqrt{n}}\sqrt{1- \frac{1}{\sqrt{2}}} & \mbox{ if } i\in \bar S^-;\\
         
         -\frac{1}{\sqrt{n}}\sqrt{1+ \frac{1}{\sqrt{2}}} & \mbox{ if } i\in S^-.
    \end{cases}
\]
Since $v_2\cdot v_1=0$, we have
\[(|S^+|-|S^-|) \sqrt{1+ \frac{1}{\sqrt{2}}} +
(|\bar S^+|-|\bar S^-|) \sqrt{1- \frac{1}{\sqrt{2}}} =0.
\]
This implies 
\[ |S^+|=|S^-|  \mbox{ and } |\bar S^+|=|\bar S^-|.\]
Since $|S|=|\bar S|=\frac{n}{2}$, we have
\[|S^+| =|\bar S^+|=|\bar S^-|=|S^-|=\frac{n}{4}.\]
Thus $n$ is divisible by $4$.

Since $v_n$ is orthogonal to $v_2$, the signs distribute differently on $S^+$,  $\bar S^+$, $\bar S^-$, and $S^-$. Up to a choice of sign, we must have
\[
v_n(i)=
\begin{cases}
        -\frac{1}{\sqrt{n}}\sqrt{1- \frac{1}{\sqrt{2}}} & \mbox{ if } i\in S^+; \\
       \frac{1}{\sqrt{n}}\sqrt{1+ \frac{1}{\sqrt{2}}} & \mbox{ if } i\in \bar S^+; \\
         -\frac{1}{\sqrt{n}}\sqrt{1+ \frac{1}{\sqrt{2}}} & \mbox{ if } i\in \bar S^-;\\
         \frac{1}{\sqrt{n}}\sqrt{1- \frac{1}{\sqrt{2}}} & \mbox{ if } i\in S^-.
    \end{cases}
\]

Now we write the following matrix as $4\times 4$ block matrices according to the vertex partition
$V(G)=S^+\cup \bar S^+ \cup \bar S^- \cup S^-$. We have
\begin{align*}
    A &= \frac{1}{2} J + \frac{\sqrt{2}}{4}n (v_2 v_2'- v_nv_n')\\
&=
\frac{1}{2} J + \frac{1}{4}
\left(
\begin{array}{cccc}
   (\sqrt{2}+1)J   & J &-J &  -(\sqrt{2}+1)J   \\
    J  & (\sqrt{2}-1)J   &  -(\sqrt{2}-1)J & -J\\
    -J & -(\sqrt{2}-1)J   &  (\sqrt{2}-1)J & J \\
    -(\sqrt{2}+1)J    & -J &J &  (\sqrt{2}+1)J   \\
\end{array}
\right)\\
&\hspace*{9mm} -\frac{1}{4}
\left(
\begin{array}{cccc}
   (\sqrt{2}-1)J    & -J &J &  -(\sqrt{2}-1)J   \\
    -J  & (\sqrt{2}+1)J   &  -(\sqrt{2}+1)J & J\\
    J  & -(\sqrt{2}+1)J   &  (\sqrt{2}+1)J & -J\\
    -(\sqrt{2}-1)J    & J &-J &  (\sqrt{2}-1)J   \\
\end{array}
\right)\\
&=\left(
\begin{array}{cccc}
   J    & J &0 &  0    \\
    J   & 0 &J &  0     \\
      0   & J &0 &  J   \\
  0   &  0 &J &  J     \\
\end{array}
\right).
\end{align*}
Therefore, $G^*$ is a blowup of $P^*_4$.
\end{proof}

\section{Lower bounds and conjectures}
In this section, we provide several lower bounds on $s_{i, j}$ and make conjectures regarding the extremal graphs for $s_{i, j}$ for certain pairs $(i, j)$. 

For $i=1$, consider the graph $K_{j+2}\cup K^*_{j+1}$. This graph is the disjoint union of the complete graph $K_{j+2}$ and the complete graph $K_{j+1}$ with loops on all of the vertices of the complete graph $K_{j+1}$. The construction for $j=3$ is shown in Figure~\ref{fig:conj1}. 

\begin{figure}[ht]
    \centering
    \begin{tikzpicture}[scale=2.5, vertex/.style={scale=0.5, circle, draw=black, fill=black},
wvertex/.style={scale=0.5, circle, draw=black, fill=white}]

\def \number {5}
\def \radius {0.75cm}
\def \degree {360/\number}

\foreach \s in {1,...,\number}
{
    \node[wvertex] (\s) at ({\degree * (\s -1)+90}:\radius) {};
}
\foreach \s in {1,...,\number}{
    \foreach \t in {1,...,\number}{
        \ifnum \s < \t
            \draw (\s)--(\t);
        \fi
        }}

\begin{scope}[xshift=2cm]

\def \number {4}
\def \radius {0.75cm}
\def \degree {360/\number}

\foreach \s in {1,...,\number}
{
    \node[vertex] (\s) at ({\degree * (\s -1)+45}:\radius) {};
}
\foreach \s in {1,...,\number}{
    \foreach \t in {1,...,\number}{
        \ifnum \s < \t
            \draw (\s)--(\t);
        \fi
        }}
\end{scope}
\end{tikzpicture}
    \caption{The graph $K_{5}\cup K_{4}^*$.}
    \label{fig:conj1}
\end{figure}

The eigenvalues of $K_{j+2}\cup K^*_{j+1}$ are 
\[(j+1)^2, 0^{j}, (-1)^{j+1},\]
where, for example, the notation $0^j$ means that the eigenvalue $0$ has multiplicity $j$. Hence, we have $\lambda_2 - \lambda_{n-j} = j+2$, so we obtain the lower bound 
\[s_{1, j} \ge \frac{j+2}{2j+3}.\]  

We conjecture that the blowups of $K_{j+2} \cup K^*_{j+1}$ are extremal for $s_{1, j}$.  

\begin{conjecture}\label{con:1jcon}
For any $j\geq 1$, we have
\[ s_{1,j}=\frac{j+2}{2j+3}.\]
If the conjecture holds, the lower bound is achieved by any graph which is a $t$-blowup of  $K_{j+2}\cup K_{j+1}^*$.
\end{conjecture}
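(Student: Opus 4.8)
The lower bound in Conjecture~\ref{con:1jcon} is already in hand: by Lemma~\ref{lem:eigsblowup} the $t$-blowups of $K_{j+2}\cup K_{j+1}^*$ have $\lambda_2-\lambda_{n-j}=(j+2)t$ on $n=(2j+3)t$ vertices, so $\frac{\lambda_2-\lambda_{n-j}}{n}=\frac{j+2}{2j+3}$ for all $t$, and with Lemma~\ref{lem:sij=sij*} this yields $s_{1,j}\ge\frac{j+2}{2j+3}$. The entire content of the conjecture is therefore the matching upper bound
\[\lambda_2(G^*)-\lambda_{n-j}(G^*)\le \frac{j+2}{2j+3}\,n\qquad\text{for every }G^*\in L_n.\]
The first thing I would record is how far Theorem~\ref{thm:genijub} (with $i=1$) already is from this target. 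Squaring the two bounds, Theorem~\ref{thm:genijub} gives $\frac{j+2}{4(j+1)}=\frac{(j+2)^2}{(2j+3)^2-1}$, while the conjecture asks for $\frac{(j+2)^2}{(2j+3)^2}$; the two differ only by the $-1$ in the denominator. So the conjectured bound is an improvement over Cauchy--Schwarz by exactly this single unit, which strongly suggests an integrality/parity phenomenon: the conjectured extremal spectrum has $\lambda_1=\lambda_2=d=j+1=\frac{n-1}{2}$ for $n=2j+3$, not the $\lambda_1=d=\frac n2$ that the moment method wants.

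Before attempting a proof I would confirm that the moment method cannot close this gap, so that one knows new ideas are genuinely required. Running the eigenvalue optimization underlying Theorem~\ref{thm:genijub} --- maximize $\lambda_2-\lambda_{n-j}$ subject to $\sum_i\lambda_i^2=nd$, $\lambda_1\ge d$, the ordering $\lambda_1\ge\lambda_2$, and even the extra constraint $\tr(A)=\sum_i\lambda_i\in[0,n]$ --- the optimum is still attained at $\lambda_1=d=\frac n2$ and reproduces exactly $\frac12\sqrt{\frac{j+2}{j+1}}$, with the trace constraint slack. Hence no argument using only the first two moments (together with $\lambda_1\ge d$ and the trace) can prove the conjecture; one must exploit the combinatorial structure of $(0,1)$-matrices to exclude the near-optimal spectra the relaxation cannot. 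A second, related difficulty is that because the relevant Cauchy--Schwarz step is now \emph{strict}, the clean ``equality case'' analysis that powered Theorems~\ref{thm:allk} and~\ref{t1} does not even get started: there is no tight moment certificate whose equality conditions pin down the spectrum.

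The plan I would pursue has two prongs. The first is to seek a sharper certificate that sees the entrywise constraints $0\le a_{ij}\le 1$, equivalently passing to the symmetric $\pm1$ matrix $M=2A-J$ and bounding $\lambda_1(M)-\lambda_{n-j}(M)$ (note $\lambda_2(A)\le\tfrac12\lambda_1(M)$ and $\lambda_{n-j}(A)\ge\tfrac12\lambda_{n-j}(M)$ by Weyl, both tight for $K_{j+2}\cup K_{j+1}^*$); one would then try to incorporate $\tr(M)=2\tr(A)-n$, and if needed $\tr(M^3)$ or a positive-semidefinite combination of $A$ and $J-A$, to manufacture the missing unit. The second prong, should a clean certificate prove elusive, is a perturbative extremal-graph analysis in the spirit of Breen--Riasanovsky--Tait--Urschel~\cite{BRTU2021+}: reduce to a blowup via Lemma~\ref{lem:sij=sij*}, write a putative extremal $A=\frac dn J+\lambda_2 v_2v_2'+(\text{negative part})$ with a $(j+1)$-dimensional bottom eigenspace, and show that unless $A$ already has the two-cluster form $K_{j+2}\cup K_{j+1}^*$ a local edge/loop switch strictly increases $\lambda_2-\lambda_{n-j}$. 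Once the bound is established by either route, the uniqueness-of-extremizer statement should follow by the method of Theorem~\ref{t1}, reading off $v_2(i)^2$ and $v_{n-j}(i)^2$ from the diagonal of the positive-semidefinite matrix $A-\lambda_{n-j}I$ (of rank $n-j-1$, off-diagonal entries in $\{0,1\}$) to reconstruct the blocks.

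The main obstacle, in either prong, is exactly the non-realizability step: the moment-optimal spectrum is infeasible for genuine $(0,1)$-matrices, and I see no short reason why. The $\pm1$ reformulation reduces this to an eigenvalue-spread inequality for symmetric $\pm1$ matrices that is again not tight under the second moment alone (for $K_{j+2}\cup K_{j+1}^*$ one has $\lambda_1(M)-\lambda_{n-j}(M)=2(j+2)$, whereas Cauchy--Schwarz gives only $(2j+3)\sqrt{\tfrac{j+2}{j+1}}>2(j+2)$), so the hard part has merely been relocated, not removed. I expect closing it will require either an explicit sum-of-squares/positive-semidefinite certificate tailored to $\pm1$ matrices or a full perturbative classification of near-extremal graphs, which is why the statement remains, for now, a conjecture.
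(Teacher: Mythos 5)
The statement you were asked about is a \emph{conjecture}: the paper does not prove it, and offers only the lower bound $s_{1,j}\ge \frac{j+2}{2j+3}$ via the spectrum $(j+1)^2,0^{j},(-1)^{j+1}$ of $K_{j+2}\cup K_{j+1}^*$ together with blowups. Your proposal establishes exactly the same lower bound by the same route (Lemma~\ref{lem:eigsblowup} plus Lemma~\ref{lem:sij=sij*}), correctly identifies that the entire open content is the matching upper bound, and --- appropriately --- does not claim to prove it. So there is no gap to report relative to what the paper actually demonstrates; you and the paper prove the same half of the statement.

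Your additional diagnostics go beyond what the paper records and are accurate: the squared comparison $\frac{j+2}{4(j+1)}=\frac{(j+2)^2}{(2j+3)^2-1}$ versus the conjectured $\frac{(j+2)^2}{(2j+3)^2}$ is a correct and illuminating way to quantify the gap left by Theorem~\ref{thm:genijub}, and your observation that the moment-relaxation optimum ($\lambda_1=d=\tfrac n2$, trace $\tfrac n2$) already satisfies the trace constraint shows convincingly that no certificate built only from $\tr(A)$, $\tr(A^2)$ and $\lambda_1\ge d$ can close it. The one caution I would add is that your two proposed prongs (a $\pm1$-matrix reformulation with higher moments, or a BRTU-style perturbative classification) are sketches of research directions, not arguments; in particular the final ``uniqueness by the method of Theorem~\ref{t1}'' step presupposes a tight certificate with clean equality conditions, which is precisely what you have just argued does not exist at the level of second moments. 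None of this is a defect in your write-up --- it is an honest assessment of an open problem --- but be explicit that nothing past the lower bound constitutes proof.
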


As some evidence for Conjecture~\ref{con:1jcon}, note that
\[\frac{j+2}{2j+3} = \frac{1}{2}+\frac{1}{4j}-\frac{3}{8j^2} + \dotsm, \]
while 
\[  \frac{1}{2}\sqrt{\frac{j+2}{j+1}}=
\frac{1}{2}+\frac{1}{4j}-\frac{5}{16j^2} + \dotsm .\]
Hence, for large $j$ the conjectured exact value for $s_{1, j}$ is close to the upper bound for $s_{1, j}$ obtained from Theorem~\ref{thm:genijub}. 

For $j=0$, consider the blowup of the one-side-closed bipartite graph $K_{(i+1)*,i}$. This graph is the complete bipartite graph $K_{i+1, i}$ with loops added on the vertices in the partite set of size $i+1$. The construction for $i=4$ is shown in Figure~\ref{fig:conj2}. 

\begin{figure}[ht]
    \centering
    \begin{tikzpicture}[scale=2.5, vertex/.style={scale=0.5, circle, draw=black, fill=black},
wvertex/.style={scale=0.5, circle, draw=black, fill=white}]

\foreach \s in {1,...,4}
{
    \node[wvertex] (\s) at (\s-1, 1) {};
}
\foreach \s in {5,...,9}
{
    \node[vertex] (\s) at (\s-5.5, 0) {};
}
\foreach \s in {1,...,4}{
    \foreach \t in {5,...,9}{
        \draw (\s)--(\t);
        }}
\end{tikzpicture}
    \caption{The graph $K_{(5)*,4}$.}
    \label{fig:conj2}
\end{figure}

The graph $K_{(i+1)*, i}$ has eigenvalues
\[i+1, 1^{i+1}, 0^{i}, (-1)^{i+1}, -i,\]
from which we obtain the lower bound 
\[ s_{i, 0} \ge \frac{i+1}{2i+1}.\]

We conjecture that this lower bound is tight. 

\begin{conjecture}\label{con:i0con}
For any $i\geq 1$, we have
\[ s_{i,0}=\frac{i+1}{2i+1}.\]
If the conjecture holds, the lower bound is achieved by any graph which is a $t$-blowup of  $K_{(i+1)*,i}$.
\end{conjecture}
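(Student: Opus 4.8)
The lower bound $s_{i,0}\ge \frac{i+1}{2i+1}$ is already supplied by the blowups of $K_{(i+1)*,i}$, so the substance of the conjecture is the matching upper bound $s_{i,0}\le \frac{i+1}{2i+1}$. The first thing I would record is that the case $i=1$ must be excluded: Theorem~\ref{t1} gives $s_{1,0}=\tfrac{1}{\sqrt2}>\tfrac23$, so the stated equality can hold only for $i\ge 2$, and I would prove it in that range. There the target sits just below the general bound $\tfrac12\sqrt{\tfrac{i+1}{i}}$ of Theorem~\ref{thm:genijub}: since $(2i+1)^2=4i(i+1)+1$, we have $\big(\tfrac{i+1}{2i+1}\big)^2=\big(\tfrac12\sqrt{\tfrac{i+1}{i}}\big)^2\big(1-\tfrac{1}{(2i+1)^2}\big)$, so the two agree to leading order and the whole task is to account for an $O(1/i^2)$ correction in the squared bound.

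Before attacking the upper bound I would isolate exactly where Theorem~\ref{thm:genijub} loses. Specializing its proof to $j=0$, every inequality is tight for the blowup of $K_{(i+1)*,i}$ except the last, $nd-d^2\le n^2/4$: the extremal graph is regular with $\lambda_1=d=\tfrac{i+1}{2i+1}n\ne\tfrac n2$, and for it $n^2/4-(nd-d^2)=\Theta(n^2)$. Thus the Cauchy--Schwarz skeleton is correct and the only waste is the substitution $d=n/2$. This invites feeding an extra constraint into the underlying eigenvalue optimization, the natural candidates being $\operatorname{Tr}(A)=\sum_k\lambda_k\in[0,n]$ and $\operatorname{Tr}(A^3)=\sum_k\lambda_k^3\ge 0$ (both valid since $A$ is a symmetric nonnegative integer matrix). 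The decisive check I would carry out first is, however, discouraging: the profile $\lambda_1=\tfrac n2$, $\lambda_2=\dots=\lambda_{i+1}=\tfrac{n}{2\sqrt{i(i+1)}}$, all middle eigenvalues $0$, and $\lambda_n=-\tfrac n2\sqrt{\tfrac{i}{i+1}}$, which saturates Theorem~\ref{thm:genijub}, already satisfies $\sum_k\lambda_k=\tfrac n2$, $\sum_k\lambda_k^2=\tfrac{n^2}{2}$, and $\sum_k\lambda_k^3=\tfrac{n^3}{8}\big(1-\tfrac{i-1}{\sqrt{i(i+1)}}\big)>0$. Hence no bound assembled from the first three spectral moments can improve on $\tfrac12\sqrt{\tfrac{i+1}{i}}$, and this is precisely why the statement remains a conjecture: the obstruction is not spectral but one of \emph{realizability}, since the moment-optimal profile is not the spectrum of any genuine $(0,1)$ matrix.

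I would therefore pursue a structural route. The cleanest is a graphon reformulation: because $s_{i,0}=\sup_n \spread_{i,0}(n)/n$, one expects $s_{i,0}=\sup_W\big(\lambda_{i+1}(T_W)-\lambda_{\min}(T_W)\big)$ over symmetric kernels $W\colon[0,1]^2\to[0,1]$, where $T_W$ is the associated integral operator and $\lambda_{\min}$ its most negative eigenvalue. The plan is to show an optimal $W$ may be taken to be a step kernel with a bounded number of parts --- a compactness and symmetrization argument exploiting that only the top $i+1$ eigenfunctions and the bottom one enter the objective --- thereby reducing to a finite eigenvalue/semidefinite optimization that one solves to obtain exactly $\tfrac{i+1}{2i+1}$, attained by the two-part kernel encoding $K_{(i+1)*,i}$. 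A complementary route, modeled on the proof of Theorem~\ref{t1}, is to assume a graph attains $\lambda_{i+1}-\lambda_n=\tfrac{i+1}{2i+1}n$, force its spectrum to be $\lambda_1=\tfrac{(i+1)n}{2i+1}$, $\lambda_2=\dots=\lambda_{i+1}=\tfrac{n}{2i+1}$, $\lambda_n=-\tfrac{in}{2i+1}$ and the rest $0$, then reconstruct the top eigenspace and the bottom eigenvector from $A=\sum_k\lambda_k v_kv_k'$ by reading off diagonal and off-diagonal entries exactly as in the identities leading to \eqref{eq:A2}, and show the only consistent $(0,1)$ solution is a blowup of $K_{(i+1)*,i}$; this would also settle the uniqueness clause.

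The main obstacle, in either route, is exactly the realizability gap isolated above. In the graphon route the hard step is proving that a two-part step kernel is optimal, i.e.\ ruling out the moment-optimal ``$d=n/2$'' profile by a convexity or SDP/flag-algebra certificate that uses the full pointwise constraint $0\le W\le 1$ rather than finitely many moments. This is precisely the ingredient that is absent in the Hadamard-friendly regime of Theorems~\ref{thm:allk} and~\ref{t1}, where a symmetric Hadamard matrix \emph{does} realize the moment optimum so no gap arises; the difficulty for $(i,0)$ with $i\ge 2$ is to certify that no $(0,1)$ matrix can push $\lambda_1$ up to $n/2$ while keeping the required gap. In the equality-case route the same obstacle reappears as the need to establish the exact extremal spectrum before any reconstruction can begin, so that argument characterizes the extremizers assuming the bound but does not by itself prove it. I expect this certification step to be where the decisive work lies.
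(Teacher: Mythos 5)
This statement is a conjecture: the paper offers no proof of the equality $s_{i,0}=\frac{i+1}{2i+1}$, only the lower bound coming from the eigenvalues of $K_{(i+1)*,i}$ (namely $i+1$, $1$ with multiplicity $i$, $0$ with multiplicity $i-1$, and $-i$, giving $\lambda_{i+1}-\lambda_n=i+1$ on $n=2i+1$ vertices) together with the observation that this is asymptotically close to the upper bound $\frac12\sqrt{\frac{i+1}{i}}$ from Theorem~\ref{thm:genijub}. Your proposal correctly reproduces this lower bound and correctly identifies that the entire open content is the matching upper bound, which you do not prove --- so, as a proof of the stated equality, it is incomplete, but no more incomplete than the paper itself. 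Your diagnosis of the obstruction is accurate and goes beyond what the paper records: you verify that the moment-optimal spectral profile saturating Theorem~\ref{thm:genijub} (with $\lambda_1=d=n/2$) is consistent with the first three power-sum constraints $\sum\lambda_k=n/2$, $\sum\lambda_k^2=n^2/2$, $\sum\lambda_k^3>0$, so no refinement of the trace-based Cauchy--Schwarz argument can close the gap; the issue is realizability of that profile by a $(0,1)$ matrix. Your two proposed routes (graphon/step-kernel reduction, or an equality-case reconstruction modeled on the proof of Theorem~\ref{t1}) are reasonable but, as you acknowledge, the second only characterizes extremizers conditional on the bound and the first lacks the key certificate.

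One substantive point you raise deserves emphasis: as literally stated for all $i\ge 1$, the conjecture is inconsistent with Theorem~\ref{t1}, since $s_{1,0}=\frac{1}{\sqrt2}\approx 0.707$ exceeds $\frac{2}{3}$, and indeed Table~\ref{tab:graphs} lists $P_4^*$, not $K_{(2)*,1}$, as extremal for the pair $(1,0)$. The range in the conjecture should be $i\ge 2$. Catching this is a genuine correction to the paper's statement, not a flaw in your argument.
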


As some evidence for Conjecture~\ref{con:i0con}, note that 
\[ s_{i, 0} \ge \frac{i+1}{2i+1} =  \frac{1}{2}+\frac{1}{4i}-\frac{1}{8i^2} + \dotsm,\]
while Theorem~\ref{thm:genijub} gives the upper bound
\[s_{i,0} \leq \frac{1}{2}\sqrt{\frac{i+1}{i}}=\frac{1}{2}+\frac{1}{4i}-\frac{1}{16i^2} + \dotsm.\]
The upper bound and lower bound for $s_{i, 0}$ are close for large $i$. 

Let $K_n^{t*}$ be the graph obtained from the complete graph $K_n$ by adding loops on $t$ vertices. The graph $K_8^{4*}$ is shown in Figure~\ref{fig:conj3}.

\begin{figure}[ht]
    \centering
    \begin{tikzpicture}[scale=2.5, vertex/.style={scale=0.5, circle, draw=black, fill=black},
wvertex/.style={scale=0.5, circle, draw=black, fill=white}]

\def \number {8}
\def \radius {0.75cm}
\def \degree {360/\number}

\foreach \s in {1,...,\number}
{
    \ifnum \s < 5
        \node[wvertex] (\s) at ({\degree * (\s -1)+22.5}:\radius) {};
    \else
        \node[vertex] (\s) at ({\degree * (\s -1)+22.5}:\radius) {};
    \fi
}
\foreach \s in {1,...,\number}{
    \foreach \t in {1,...,\number}{
        \ifnum \s < \t
            \draw (\s)--(\t);
        \fi
        }}
        
\end{tikzpicture}
    \caption{The graph $K_{8}^{4*}$.}
    \label{fig:conj3}
\end{figure}

The graph $K_{2j+4}^{(j+2)*}$ has eigenvalues
\[\frac{2j+3 + \sqrt{4j^2+16j+17}}{2}, j+1, 0^{j+1}, (-1)^j, \frac{2j+3 - \sqrt{4j^2+16j+17}}{2},\] and so $K_{2j+4}^{(j+2)*}$  gives the following lower bound on $s_{0, j}$:
\[s_{0,j}\geq \frac{(2j+5)+\sqrt{4j^2+16j+17}}{4(j+2)}. 
\]

We conjecture that this lower bound is tight for $j\ge 1$. 

\begin{conjecture}\label{con:0jcon}
For any $j\geq 1$, we have
\[ s_{0,j}=\frac{(2j+5)+\sqrt{4j^2+16j+17}}{4(j+2)} .\]
If this conjecture holds, the lower bound is achieved by the $t$-blowups of $K_{2j+4}^{(j+2)*}$.
\end{conjecture}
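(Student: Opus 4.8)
The lower bound is the easy direction. Computing the spectrum of $K_{2j+4}^{(j+2)*}$ from its equitable partition into the $j+2$ looped and $j+2$ unlooped vertices gives the two quotient eigenvalues $\frac{2j+3\pm\sqrt{4j^2+16j+17}}{2}$ together with $0$ of multiplicity $j+1$ and $-1$ of multiplicity $j+1$. Since $\frac{2j+3-\sqrt{4j^2+16j+17}}{2}\in(-1,0)$, the bottom $j+1$ eigenvalues are all equal to $-1$, so $\lambda_{n-j}=-1$ and $\lambda_1-\lambda_{n-j}=\frac{2j+5+\sqrt{4j^2+16j+17}}{2}$; Lemma~\ref{lem:eigsblowup} then keeps the ratio $\frac{\lambda_1-\lambda_{n-j}}{n}$ constant under $t$-blowups. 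So the entire content is the matching upper bound, together with uniqueness of the extremizers.

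I would begin by locating the slack in Theorem~\ref{thm:0jthm}, whose bound is strictly larger than the conjectured constant. Testing that proof against the spectrum above, the lumping $(j+1)\lambda_{n-j}^2\le\lambda_{n-j}^2+\cdots+\lambda_n^2$ is \emph{tight} (the bottom $j+1$ eigenvalues all equal $-1$), but two steps are necessarily lossy for $K_{2j+4}^{(j+2)*}$: discarding the middle eigenvalues ignores the stray quotient eigenvalue $\mu=\frac{2j+3-\sqrt{4j^2+16j+17}}{2}\approx-\tfrac12$, and the inequality $\lambda_1\ge d$ is strict because the extremizer is not regular (looped vertices have degree one larger than unlooped ones). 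Hence any tight argument must retain the rank-two quotient pair $\{\lambda_1,\mu\}$ and must avoid passing through $\lambda_1=d$.

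This suggests two routes. The optimistic one is a refined spectral-moment argument that keeps $\mu$ in play and replaces $\lambda_1\ge d$ by a sharper spectral-radius bound adapted to the looped/unlooped partition --- for example the degree bound $\lambda_1^2\ge\frac1n\sum_i d_i^2$, or a Rayleigh quotient $x'Ax/x'x$ for a test vector $x$ constant on loops and on non-loops --- possibly supplemented by the nonnegative third moment $\tr(A^3)=\sum_i\lambda_i^3$, which is large for the clique-like extremizer. I would first check whether the finite optimization of maximizing $\lambda_1-\lambda_{n-j}$ over spectra of the shape ``$a$, a block of zeros, one $\mu\in(-1,0)$, and a block of $-1$'s'' subject to these enriched constraints already has optimum equal to the conjectured value; a natural guess, mirroring Theorem~\ref{t1}, is that it forces $\tr(A)=\tfrac n2$, i.e.\ exactly half the vertices looped. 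The realistic route, if the relaxation still overshoots, is a stability argument in the spirit of Breen--Riasanovsky--Tait--Urschel: using $s_{0,j}^*=\sup_n\spread^*_{0,j}(n)/n$, pass to a near-extremal sequence and a bounded-size weighted graph, prove it must be a near-complete graph carrying roughly half its weight on loops, and then solve the resulting low-dimensional optimization exactly. Uniqueness would follow as in the proof of Theorem~\ref{t1}: once the exact spectrum is known, reconstruct $A$ from $\sum_i\lambda_i v_iv_i'$ and read off the $0$--$1$ block structure.

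The main obstacle is that, unlike in Theorems~\ref{thm:genijub}--\ref{t1}, the extremizer here is non-regular and the target is not determined by the first two spectral moments, so the clean Cauchy--Schwarz machinery provably cannot close the residual $O(1/j^2)$ gap on its own. In particular the reconstruction argument of Theorem~\ref{t1} cannot even be started, since it relied on the moment bound first pinning down the exact extremal spectrum --- which is no longer available. I expect the hardest step to be the structural one: ruling out non-regular near-extremal graphs whose spectra imitate the clique-with-loops pattern without being blowups of $K_{2j+4}^{(j+2)*}$, the same stability difficulty that made the $j=0$ spread problem hard.
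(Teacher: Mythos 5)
This statement is a conjecture: the paper proves only the lower bound, via the spectrum of $K_{2j+4}^{(j+2)*}$, and your verification of that direction is correct and follows the same route (quotient computation on the looped/unlooped partition plus Lemma~\ref{lem:eigsblowup}). In fact your spectrum $\bigl\{\tfrac{2j+3\pm\sqrt{4j^2+16j+17}}{2},\,0^{j+1},\,(-1)^{j+1}\bigr\}$ is the correct one --- the list printed in the paper, which includes an eigenvalue $j+1$ and only $j$ copies of $-1$, has trace $2j+4$ rather than the required $j+2$ and must be a typo --- though both versions lead to the same value of $\lambda_1-\lambda_{n-j}$ and hence the same lower bound. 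Your discussion of the matching upper bound is a reasonable research plan (and you correctly identify why the Cauchy--Schwarz argument of Theorem~\ref{thm:0jthm} cannot be tight for this non-regular extremizer), but it is not a proof; since the paper likewise offers no proof of the upper bound, there is nothing further to compare.
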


Note that the lower bound is 
\[s_{0,j}\geq \frac{(2j+5)+\sqrt{4j^2+16j+17}}{4(j+2)} 
=1+\frac{1}{4j}-\frac{7}{16j^2} + \dotsm, 
\]
 while Theorem~\ref{thm:0jthm} gives
\[s_{0,j}\leq   \frac{1}{2}\left(1+\sqrt{\frac{j+2}{j+1}} \right)= 1+\frac{1}{4j}-\frac{5}{16j^2} + \dotsm. 
\]
Thus the lower bound and upper bound for $s_{0, j}$ are close for large $j$.

Determining $s_{i, j}$ for all pairs $(i, j)$ seems to be a hopeless problem. Let us at least mention some additional lower bounds which give rough asymptotics and some intuition. Nikiforov~\cite{Nik15} showed that for $n$ sufficiently large, there is a graph $G$ on $n$ vertices with $\lambda_{i+1}\ge \left(\frac{1}{2\sqrt{i}+\sqrt[3]{i+1}}-o(1)\right)n$ for a fixed sufficiently large $i$. If $n\ge 4^j$, then $\lambda_{n-j} \le 0$ (see \cite[Theorem 2.3]{Nik15}), so $s_{i, j} \ge \frac{1}{2\sqrt{i} + \sqrt[3]{i+1}}$, while if $i$ is fixed and $j$ is much larger than $i$, then by Theorem~\ref{thm:genijub}, we have
\[s_{i, j} \le \frac{1}{2}\sqrt{\frac{i+j+1}{i(j+1)}} =  \frac{1}{2\sqrt{i}}\left(1 + \frac{i}{2(j+1)} - \frac18\left(\frac{i}{j+1}\right)^2 + \ldots\right).\]
For small $i$, the $n$-vertex graphs with largest known $\lambda_{i+1}/n$~\cite{Nik15, Lin} likewise give reasonable lower bounds on $\lambda_{i+1} - \lambda_{n-j}$ when $j$ is large. Intuitively, this suggests that when $i$ is fixed and $j$ is much larger than $i$, maximizing the spread $s_{i, j}$ is similar to maximizing $\lambda_{i+1}/n$. Similar results hold if $j$ is fixed. 

 We now consider the case that $i$ and $j$ are relatively close to each other. Let $G^*$ be the graph of order $4\cdot 2^{m}$ achieving equality in Theorem \ref{thm:allk}, where $m$ is the positive integer such that $2^{m-1} < \max\{i, j+1\} \leq 2^m$. The existence of $G^*$ is guaranteed by the existence of a Hadamard matrix of order $2\cdot2^{m}$. For any $0 \leq i,j \leq 2^m,$ we have the following bounds:
\[
\frac{1}{2\sqrt{\max\{i, j+1\}}} <  \frac{1}{2\sqrt{2^{m-1}}} \leq s_{i,j} \leq \frac{1}{\sqrt{2\min\{i, j+1\}}}
\]
In particular, if $\min\{i, j+1\} \ge 2^{m-1}$, then $s_{i, j}$ is known to within a factor of $\frac{1}{\sqrt{2}}$. 

It would be interesting to determine $s_{i, j}$ exactly for more pairs $(i, j)$. We conclude with tables showing the best bounds on $s_{i, j}$ known for small values of $i$ and $j$. In Table~\ref{tab:bounds}, for each pair $(i, j)$ the best known lower bound is listed in the bottom left and the best known upper bound in the top right corner. The upper bounds are obtained from either Theorem~\ref{thm:genijub} or Theorem~\ref{thm:0jthm}, while the lower bounds are obtained from the blowups of specific graphs. For the pairs $(0, 0), (1, 0), (2, 1)$ and $(4, 3)$ we list the known exact values obtained from either \cite{BRTU2021+} or from Theorem~\ref{thm:allk}. In Table~\ref{tab:graphs}, for the pairs $(0, 0), (1, 0), (2, 1), (4, 3)$ we list the exact extremal graphs (or the graph's adjacency matrix) and for all other pairs we list the graphs which give the best known lower bounds. The graphs listed in Table~\ref{tab:graphs} which have not been previously defined in the paper were obtained by computer search; the sparse6 representation of each of these graphs is listed in Table~\ref{tab:sparse6}. 

\setlength{\tabcolsep}{2pt}
\begin{table}[ht]
    \centering
    \begin{tabular}{|C{0.8cm}||*{5}{C{2.8cm}|}}
        \hline
        \diagbox[innerleftsep=4pt, innerwidth = 20pt]{$i$}{$j$} & 0 & 1 & 2 & 3 & 4 \\
        \hline\hline
        0 & $2/\sqrt{3}$~\cite{BRTU2021+} & \mt{1.090}{1.112} & \mt{1.066}{1.077} & \mt{1.052}{1.059} & \mt{1.043}{1.048}\\
        \hline
        1 & $1/\sqrt{2}$ & \mt{0.600}{0.612} & \mt{0.571}{0.577} & \mt{0.556}{0.559} & \mt{0.545}{0.547}\\
        \hline
        2 & \mt{0.600}{0.612} & $1/2$ & \mt{0.441}{0.456} & \mt{0.415}{0.433} & \mt{0.404}{0.418}\\
        \hline
        3 & \mt{0.571}{0.577}  & \mt{0.441}{0.456} & \mt{0.404}{0.408} & \mt{0.368}{0.382} & \mt{0.341}{0.365}\\
        \hline
        4 & \mt{0.556}{0.559} & \mt{0.415}{0.433} & \mt{0.368}{0.382} & $\sqrt{2}/4$ & \mt{0.315}{0.335}\\
        \hline
    \end{tabular}
    \caption{Known lower and upper bounds for $s_{i,j}$.}
    \label{tab:bounds}
\end{table}
\newpage
\begin{table}[ht]
    \centering
    \begin{tabular}{|C{0.8cm}||*{5}{C{2.8cm}|}}
        \hline
        \diagbox[innerleftsep=4pt, innerwidth = 20pt]{$i$}{$j$} & 0 & 1 & 2 & 3 & 4 \\
        \hline\hline\xrowht{0.75cm}
        0\,\, & $K_{3}^{(2)*}$ \cite{BRTU2021+}  & $K_{6}^{(3)*}$ & $K_{8}^{(4)*}$ & $K_{10}^{(5)*}$ & $K_{12}^{(6)*}$\\
        \hline\xrowht{0.75cm}
        1\,\, & $P^*_4$ & $K_{3} \cup K_{2}^*$ & $K_{4} \cup K_{3}^*$ & $K_{5} \cup K_{4}^*$ & $K_{6} \cup K_{5}^*$\\
        \hline\xrowht{0.75cm}
        2\,\, & $K_{(3)*,2}$ & $Q_3^*$ & $G_1$ & $G_2$ & $G_3$ \\
        \hline\xrowht{0.75cm}
        3\,\, & $K_{(4)*,3}$ & $G_1^c$ & $G_4$ & $G_5$ & $G_6$ \\
        \hline\xrowht{0.75cm}
        4\,\, & $K_{(5)*,4}$ & $G_2^c$ & $G_5^c$ & $\frac{1}{2}(K \otimes H_8 + J_{16})$ & $G_7$ \\
        \hline
    \end{tabular}
    \caption{Best known constructions for $s_{i,j}$.}
    \label{tab:graphs}
\end{table}
\setlength{\tabcolsep}{6pt}
\begin{table}[h!]
    \centering
    \begin{tabular}{|c|c|}
        \hline
        Graph & sparse6 \\
        \hline\hline
        $G_1$ & \verb|:K_ES`s_QOqDL?G`f_C`SOAGXsoAOiCqEOhdJ| \\
        $G_2$ & \verb|:N_EC?aF?G`c_E?Qe_CXAecaPSQEPATQEPATTK`IdtK\ATkiWyCkYz|\\
        $G_3$ & \verb|:Oc?GgbaMGqOL?PbsIWyIDK\AXcIXATOAGXW@CKawAK\ATk_CXAiUq?PEMlbV^|\\
        $G_4$ & \verb|:FehIA_t_S|\\
        $G_5$ & \verb|Graph on 20 vertices|\tablefootnote{All graphs in Table \ref{tab:sparse6} can be viewed at \url{https://github.com/ghbrooks28/maximumSpectralGaps}.}\\
        $G_6$ & \verb|:K@GKPT?QXAecOhxBGWyG@CLC?bGSqTOAG`RhV|\\
        $G_7$ & \verb|:J`?S@oBG[aDeOpwbJCPsHaOhc^|\\
        \hline
    \end{tabular}
    \caption{Sparse6 representation for previously undefined graphs in Table \ref{tab:graphs}.}
    \label{tab:sparse6}
\end{table}
\newpage

\end{document}